\title{Homotopy versus isotopy: 2-spheres in 5-manifolds}
\author[D. Kosanovi\'{c}]{Danica Kosanovi\'{c}}
\address{Dept.\ of Mathematics, ETH Z\"urich, Switzerland}
\email{danica.kosanovic@math.ethz.ch}
\author[R. Schneiderman]{Rob Schneiderman}
\address{Dept.\ of Mathematics, Lehman College, City University of New York, Bronx, NY}
\email{robert.schneiderman@lehman.cuny.edu}
\author[P. Teichner]{Peter Teichner}
\address{Max-Planck-Institut f\"ur Mathematik, Bonn, Germany}
\email{teichner@mac.com}
\begin{document}

\maketitle

\begin{abstract}
    In this note we give a complete obstruction for two  homotopic embeddings of a 2-sphere into a 5-manifold to be isotopic. The results are new even though the methods are classical, the main tool being the elimination of double points via a level preserving Whitney move in codimension~$3$. Moreover, we discuss how this recovers a particular case of a result of Dax on metastable homotopy groups of embedding spaces. It follows that ``homotopy implies isotopy'' for 2-spheres in simply-connected 5-manifolds and for 2-spheres admitting algebraic dual 3-spheres.
\end{abstract}



\section{Introduction and results}

A curious consequence of our generalizations \cite{ST-LBT,KT-4dLBT} of the 4-dimensional Light Bulb Theorems of David Gabai~\cite{Gabai-spheres,Gabai-disks} is that homotopic 2-spheres $R,R'\colon S^2 \hra M$, embedded in a 4-manifold $M$ with a common dual sphere, are smoothly isotopic in $M$ if and only if they are isotopic in the 5-manifold $M\times \R$, see \cite[Cor.1.5]{ST-LBT}.
The complete isotopy obstruction in \cite[Thm.1.1]{ST-LBT} is given by the Freedman--Quinn invariant 
\[
    \fq(R,R')\coloneqq [\mu_3(H)]\in \faktor{\F_2T_M}{\mu_3(\pi_3M)},
\]
where $\mu_3(H)$ is the self-intersection invariant of the track $H\colon S^2 \times [0,1] \imra M \times \R \times [0,1]$ of a generic homotopy between $R$ and $R'$ in $M \times \R$. Moreover, $\F_2T_M$ is the $\F_2$-vector space with basis $T_M\coloneqq\{g\in\pi_1M \mid g^2=1\neq g\}$, the set of involutions in $\pi_1M$. It turns out that Wall's self-intersection invariant $\mu_3$ 
also gives a homomorphism $\mu_3\colon\pi_3M\to \F_2T_M$,
whose cokernel eliminates the choice of homotopy in the definition of $\fq$. Michael Freedman and Frank Quinn introduced this invariant in the book \cite[Ch.10]{FQ}, while studying topological concordance classes of embedded 2-spheres in 4-manifolds. 

This isotopy classification also follows from \cite[Thm.1.1]{KT-4dLBT}, via a more powerful invariant, due originally to Jean-Pierre Dax \cite{Dax}, which detects relative isotopy classes of neatly embedded 2-disks having a common dual in $\partial M$. Dax extends the parametrized double-point elimination method of Andr\'e Haefliger~\cite{Haefliger-bulletin,Haefliger-plong}, which is in turn an extension of the Whitney trick \cite{Whitney}. Haefliger's results were used by Lawrence Larmore~\cite[Thm.6.0.1]{Larmore} to show a special case of Dax's result, see equation \eqref{eq-intro:Dax} below.

In the current paper we consider the ``homotopy versus isotopy'' question for $2$--spheres in general 5--manifolds and show that there is again a self-intersection invariant of a homotopy in a quotient of the group ring of the ambient fundamental group which detects isotopy classes. The dimensions under consideration here are right at the transition entering high-dimensional topology, with successful Whitney moves generally available in the presence of vanishing algebraic obstructions. With this in mind, our exposition will be aimed at describing this transition from the point of view of the low-dimensional topologist, rather than starting by presenting results in full generality. In particular, we will:
\begin{enumerate}
    \item explain how the isotopy classification can be described by self-intersection invariants of homotopies, using a level-preserving Whitney trick;
    \item explain how Dax's work recovers the same result, from the perspective of space level techniques and homotopy groups of embedding spaces.
\end{enumerate}
Both approaches can be generalized to describe isotopy classifications for compact $n$-manifolds embedded in $(2n+1)$-manifolds. In upcoming work~\cite{KST-Dax} we recast \cite{Dax} in full generality in this language; see Theorem~\ref{thm:main-Dax-part} below. 

We next give a quick outline of the main results, and refer to the rest of the introduction for details.
For an embedded sphere $U\colon S^2\hra N^5$ in a $5$--manifold $N$, thought of as the ``Unknot'', we will define the set $\A_{U_*}$ as a certain quotient of the group ring $\Z\pi_1N$, see Definition~\ref{def:unbased-A_F}. 
The image in $\A_{U_*}$ of the self-intersections $\mu_3(H)$ of a generic track $H\colon S^2 \times I \imra N \times I$ of a homotopy between $U$ and $R\colon S^2\hra N$ will by design only depend on $U$ and $R$, not on $H$.
Denoting this invariant by $\fq_{U_*}(R)\coloneqq [\mu_3(H)]\in \A_{U_*}$, a basic statement of our main result is the following corollary of Theorem~\ref{thm:free}:
\begin{cor}\label{cor:free}
    Homotopic spheres $U$ and $R$  are isotopic if and only ${\fq_{U_*}(R)=0\in\A_{U_*}}$. Moreover, any element in $\A_{U_*}$ is realized as $\fq_{U_*}(R)$ for an embedded sphere $R$.
\end{cor}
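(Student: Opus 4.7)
\emph{Plan.} The argument decomposes into three parts: well-definedness of $\fq_{U_*}$, the easy ``only if'' direction, and the geometric ``if'' direction together with realization, both of which rest on the level-preserving Whitney move in codimension $3$.

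For well-definedness, I would define $\A_{U_*}$ as the quotient of $\Z\pi_1 N$ by the subgroup spanned by all differences $\mu_3(H_1) - \mu_3(H_2)$ as $H_1, H_2$ range over generic homotopies from $U$ to $U$; equivalently, this is the image of a $\mu_3$-homomorphism from a suitable homotopy group at $U$ of the relevant immersion space, a $5$-dimensional analogue of the $\mu_3 \colon \pi_3 M \to \F_2 T_M$ recalled in the introduction. With this choice, $\fq_{U_*}(R) \coloneqq [\mu_3(H)]$ is manifestly independent of the homotopy $H$ chosen from $U$ to $R$, and the ``only if'' direction is then immediate: an ambient isotopy from $U$ to $R$ yields a level-preserving embedded track $H$, which has $\mu_3(H) = 0$ tautologically.

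The main direction is: if $\fq_{U_*}(R) = 0$, then $U$ and $R$ are isotopic. Begin with any generic homotopy $H \colon S^2 \times I \imra N \times I$ from $U$ to $R$. Since $\dim(S^2 \times I) = 3$ and $\dim(N \times I) = 6$, the double-point set of $H$ consists of finitely many points occurring at distinct levels, each contributing a signed element of $\pi_1 N$ to $\mu_3(H)$. By hypothesis $[\mu_3(H)] = 0$ in $\A_{U_*}$, so $\mu_3(H)$ lies in the indeterminacy and can be killed by splicing into $H$ finitely many local ``loop-of-immersions'' moves near $U$, each realizing a chosen generator of the indeterminacy. The resulting homotopy $H'$ has $\mu_3(H') = 0 \in \Z\pi_1 N$, so its double points cancel algebraically in pairs sharing a common group element with opposite signs. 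The codimension-$3$ dimension count provides, by general position, an embedded Whitney disk $W$ for each such pair, disjoint from $H'$ off its boundary arcs. The level-preserving Whitney move advertised in the introduction then eliminates the pair without introducing new double points and without destroying the level structure; iterating yields an embedded level-preserving homotopy from $U$ to $R$, i.e.\ an ambient isotopy.

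Realization is proved by the reverse construction: given $\alpha = \sum n_i g_i \in \Z\pi_1 N$, perform successive finger moves on $U$ along loops based at $U$ representing the $g_i$ with signs $n_i$; reversing the fingers yields a homotopy whose double points contribute exactly $\alpha$, so the resulting embedded sphere $R$ satisfies $\fq_{U_*}(R) = [\alpha]$. The principal technical obstacle I anticipate is executing the level-preserving Whitney move: whereas general position in dimension $6$ readily supplies an embedded Whitney disk, arranging it to respect the $I$-direction — after possibly coalescing the paired double points to a common level and controlling a one-parameter push-off — is precisely the subtle transition into high-dimensional topology that the authors signal as the core innovation of their method.
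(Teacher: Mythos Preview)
Your overall architecture --- well-definedness, the vanishing direction via a level-preserving Whitney move, and realization via finger moves --- matches the paper's proof of Theorem~\ref{thm:free}, from which Corollary~\ref{cor:free} is immediate. The realization argument and the Whitney-move step are essentially as in the paper (Lemma~\ref{lem:mu-action}, Proposition~\ref{prop:6d-Whitney}, Lemma~\ref{lem:homotopies-to-isotopies}).

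However, your treatment of $\A_{U_*}$ and well-definedness has a genuine gap. You propose to define $\A_{U_*}$ as $\Z\pi_1N$ modulo the subgroup generated by $\mu_3(J)$ for self-homotopies $J$ of $U$, and claim this makes $\fq_{U_*}(R)=[\mu_3(H)]$ ``manifestly'' independent of $H$. It does not. If $H,H'$ are two \emph{free} homotopies from $U$ to $R$, then $J_s\coloneqq H\cup(-H')$ is a self-homotopy with some core $s\in\Stab[U_*]$, and the paper's Lemma~\ref{lem:basic-concatenation} gives
\[
\mu_3(J_s)=\mu_3(H)-s\,\mu_3(H')\,s^{-1},
\]
so $\mu_3(H)-\mu_3(H')=\mu_3(J_s)+\bigl(s\mu_3(H')s^{-1}-\mu_3(H')\bigr)$. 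The second term need not lie in your subgroup. Concretely, for $U$ the trivial sphere one can take every $J_s$ to be an isotopy (Corollary~\ref{cor:null-homotopic}), so your subgroup reduces to $\phi_{[U_*]}(\pi_3N)$, yet the paper still quotients by the conjugation action of $\pi_1N$. The point is that $\A_{U_*}$ is not a quotient \emph{group} at all: it is the orbit \emph{set} of the affine action ${}^sa=U_s+sas^{-1}$ of $\Stab[U_*]$ on $\A_{[U_*]}$ (Definition~\ref{def:unbased-A_F}), and ``$\fq_{U_*}(R)=0$'' means $\mu_3(H)$ lies in the orbit of $0$, not in a subgroup.

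A smaller issue: after splicing you assert $\mu_3(H')=0\in\Z\pi_1N$, but $\mu_3$ only takes values in $\A=\Z\pi_1N/\langle g+g^{-1},1\rangle$; the relations are needed before double points can be paired as you describe (a single trivial-group-element double point, or a pair contributing $g$ and $g^{-1}$ with equal sign, is dealt with via a cusp --- see the proof of Lemma~\ref{lem:pairs-with-same-time-coordinate}).
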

We note that in this 5-dimensional setting, the result does not require any dual spheres (unlike in four dimensions), cf.\ Corollary~\ref{cor:5d-LBT}. 
If $\pi_1N$ is trivial, then $\A_{U_*} =\{0\}$ and we get:
\begin{cor}\label{cor:simply-connected}
    Homotopy implies isotopy for $2$--spheres in simply-connected $5$--manifolds.
\end{cor}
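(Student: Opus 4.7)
Since the excerpt explicitly claims $\A_{U_*} = \{0\}$ once $\pi_1 N$ is trivial, I would derive Corollary~\ref{cor:simply-connected} immediately from Corollary~\ref{cor:free}: for any two homotopic embeddings $U, R \colon S^2 \hra N^5$, the obstruction $\fq_{U_*}(R)$ lies in the trivial group $\A_{U_*}$ and so is automatically zero, whereupon Corollary~\ref{cor:free} produces the desired isotopy from $U$ to $R$.

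The substantive task is therefore to verify that $\A_{U_*} = \{0\}$ when $\pi_1 N = 1$. I would unpack Definition~\ref{def:unbased-A_F}, which presents $\A_{U_*}$ as a quotient of the group ring $\Z\pi_1 N$. In the simply-connected case this group ring collapses to $\Z$, so the goal is to exhibit relations forcing this $\Z$ to vanish. Guided by the 4-dimensional analog $\F_2 T_M / \mu_3(\pi_3 M)$, which vanishes when $\pi_1 M = 1$ because the involution set $T_M$ excludes the identity, I expect $\A_{U_*}$ either to exclude the trivial group element from the generating set by definition, or to carry quotient relations coming from Wall-type self-intersections of classes in $\pi_3 N$ that already hit a generator of $\Z$.

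As an independent geometric sanity check, I would also sketch a direct Whitney-move proof, independent of the full machinery behind $\A_{U_*}$. The track of the given homotopy yields a generic map $H \colon S^2 \times I \imra N \times I$ from a 3-manifold into a 6-manifold, with finitely many transverse interior double points. With $\pi_1 N$ trivial, every double-point loop is null-homotopic and hence bounds a Whitney disk; the dimension counts $2+2 < 6$ (for embeddedness of the Whitney disk) and $2+3 < 6$ (for disjointness from the remainder of the track) are precisely what is needed for the level-preserving Whitney trick advertised in the paper to remove double points in pairs.

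The main obstacle in this direct approach is the algebraic cancellation step: Whitney moves pair up double points of opposite signs, which requires $\mu_3(H) = 0 \in \Z$. To force this one has to exploit the indeterminacy of the track, modifying $H$ by tubing in classes of $\pi_3 N$, and checking that this suffices to kill the integer $\mu_3(H)$ is exactly the computational content of $\A_{U_*} = \{0\}$, closing the loop with the first approach.
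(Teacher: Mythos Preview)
Your main line is correct and is exactly the paper's argument: the corollary is immediate from Corollary~\ref{cor:free} once $\A_{U_*}=\{0\}$. However, you treat the verification of $\A_{U_*}=\{0\}$ as a ``substantive task'' and only speculate about two possible mechanisms, when in fact it is immediate from the definition of $\A$ itself: by definition $\A = \Z\pi_1N / \langle g+g^{-1}, 1\rangle$, and the relation $1=0$ (setting the trivial group element to zero) already forces $\A = \Z/\langle 1\rangle = 0$ when $\pi_1N$ is trivial. No appeal to $\pi_3N$ or to $\phi_{[U_*]}$ is needed; every quotient of the zero group is zero, so $\A_{[U_*]}=0$ and $\A_{U_*}=\{0\}$.

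For your direct geometric sanity check, the mechanism that kills the integer count of double points of $H$ is not tubing in $\pi_3N$-classes but rather a \emph{cusp homotopy}, which introduces a single double point with trivial group element and arbitrary sign; this is precisely the origin of the relation $1=0$ in $\A$ (see section~\ref{subsec:3-in-6}). The paper itself points to this alternative route just after the corollary: cusp-cancellation produces a concordance, and then Hudson's concordance-implies-isotopy theorem finishes the argument.
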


There is a straightforward proof of Corollary~\ref{cor:simply-connected}, using cusp-cancellation (in dimension 6) and the theorem of John Hudson \cite{Hudson} that in codimension~$>2$ concordance implies isotopy. We will give a self-contained proof of the general classification result, by providing a level-preserving version of the Whitney move in codimension~$>2$ (Proposition~\ref{prop:6d-Whitney}).

If $\pi_1N$ is not trivial then our classification result is similar to the $4$-dimensional setting with common duals, namely $\fq_{U_*}$ gives the unique obstruction for embeddings that are homotopic to be isotopic. Our main work will be in spelling out the precise range of this obstruction and showing that all these values are realized. A new issue that arises in the current setting is the distinction between based and free homotopies, whereas the assumption of common duals in the 4-dimensional setting essentially allowed for consideration of only based homotopies (cf.~\cite[Thm.6.1]{Gabai-spheres}, \cite[Lem.2.1]{ST-LBT}). We will occasionally emphasize this issue by applying the adjective ``free'' to the terms ``homotopy'' and ``isotopy'', even though by traditional definitions it would suffice to just omit the adjective ``based''.

\subsection{2-knots in 5-manifolds}\label{subsec:intro-square-diagram}
We now turn to precise formulations of our main results, working in the smooth oriented category throughout.
Fixing a basepoint in such a 5-manifold $N$, and a basepoint in $S^2$, we have
the following commutative diagram which will guide the discussion of our invariants:
\[
\begin{tikzcd}
    \pi_0 \Emb_*(S^2, N) \rar[two heads]{p_*} \dar[two heads]{\text{mod }\pi_1N} & \dar[two heads]{\text{mod } \pi_1N}  \pi_0\Map_*(S^2, N) \\
    \pi_0\Emb(S^2, N) \rar[two heads]{p} & \pi_0\Map(S^2, N) 
\end{tikzcd}
\]
Here $\pi_0 \Emb_*(S^2, N)\coloneqq \{\text{based embeddings } S^2 \hra N \} / \text{based isotopy}$ is the embedded version of $\pi_0\Map_*(S^2, N)=\pi_2N$,
and 
 $\pi_0 \Emb(S^2, N)\coloneqq \{\text{embeddings } S^2 \hra N \} / \text{free isotopy}$ is the embedded version of $\pi_0\Map(S^2, N)=[S^2,N]$.
Both horizontal arrows forget the fact that we have embeddings, and 
the vertical arrows divide out the $\pi_1N$-actions (using embedded tubes along closed paths at the basepoint on the left hand side).
Both $p$ and $p_*$ are surjective (that is, any (based) map is homotopic to a (based) embedding) by general position: the dimension of the double point set is $5-6=-1$ (so generically this set is empty), since codimensions of generic intersections add.

Along with using the label $*$ for based objects, our notational convention is to use a bracket to denote the homotopy class of an embedded object, which is otherwise considered up to isotopy. For example, the upper horizontal map $p_*$ sends $R_*\in \pi_0 \Emb_*(S^2, N)$ to $[R_*]\in\pi_0 \Map_*(S^2, N)$,
and the left vertical map sends $R_*$ to $R\in \pi_0\Emb(S^2, N)$. 
We are ultimately interested in the fibers of $p$, but it turns out to be convenient to first understand the fibers of $p_*$.

\subsection{The based isotopy invariant}\label{subsec:intro-based-isotopy}

As recalled in \eqref{eq-def:mu3-H} below, the quotient
\[
    \A\coloneqq \faktor{\Z\pi_1N}{\langle g+g^{-1},1\rangle}
\]
of the integral fundamental group ring $\Z\pi_1N\cong\Z\pi_1(N\times I)$
is the usual target for the self-intersection invariant
\begin{equation}\label{eq:mu3-imm}
    \mu_3\colon\big\{
      \text{simply-connected $3$-manifolds immersed in the $6$-manifold $N\times I$}
    \big\}\ra \A.
\end{equation}
Let us fix a based embedding $U_*\colon S^2 \hra N$, and define a homomorphism of abelian groups $\phi_{[U_*]}\colon \pi_3N \longrightarrow \A$ by
\begin{equation}\label{eq-def:phi}
    \phi_{[U_*]}(A)\coloneqq \mu_3(A) + [\lambda_N(A,[U_*])]
\end{equation}
where $\mu_3$ denotes the self-intersection invariant on $\pi_3(N\times I)\cong\pi_3N$, and $\lambda_N$ is the intersection pairing between $\pi_3N$ and $\pi_2N$ taking values in $\Z\pi_1N$. 


\begin{defn}\label{def:based-A_F} 
$
\A_{[U_*]} \coloneqq \faktor{\A}{\phi_{[U_*]}(\pi_3N)}
$
\end{defn}

Note that the abelian group $\A_{[U_*]}$ only depends on the based homotopy class  $[U_*]\in\pi_2N$. 
We now consider the fiber of $p_*$ over $[U_*]\in\pi_2N$:
\[
    p_*^{-1}([U_*]) \coloneqq \faktor{\big\{R_*\colon S^2\hra N \mid R_* \text{ is based homotopic to }  U_*\big\}}{\text{based isotopy}}\;.
\]
\begin{defn}\label{def:based-fq}
    For $R_*\in p_*^{-1}[U_*]$, let $H_*\colon S^2\times I\imra N\times I$ be a generic track of a based homotopy from $U_*$ to $ R_*$, and define:
    \[
        \fq_{[U_*]}(R_*)\coloneqq [\mu_3(H_*)]\in\A_{[U_*]}.
    \]
\end{defn}
By the following theorem $\fq_{[U_*]}(R_*)$ does not depend on the choice of homotopy, and vanishes if and only if $R_*$ and $U_*$ are based isotopic.

\begin{thm}\label{thm:based-fq}
    The map $\fq_{[U_*]}\colon p_*^{-1}[U_*]\ra \A_{[U_*]}$ is a bijection, whose inverse is given by a geometric action on $U_*$.
\end{thm}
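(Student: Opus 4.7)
The plan is to exhibit $\fq_{[U_*]}$ as a bijection by constructing a geometric inverse via an action of $\A_{[U_*]}$ on $p_*^{-1}[U_*]$ based at $U_*$. For well-definedness of $\fq_{[U_*]}$, given two generic tracks $H_*, H_*' \colon S^2 \times I \imra N \times I$ of based homotopies from $U_*$ to $R_*$, their concatenation is a based self-homotopy of $U_*$ representing a class $A \in \pi_3 N$. The difference $\mu_3(H_*) - \mu_3(H_*')$ equals the total self-intersection of this closed-up track, which decomposes in $\A$ as $\mu_3(A) + [\lambda_N(A, [U_*])] = \phi_{[U_*]}(A)$: the first summand counts self-intersections of an immersed representative of $A$, the second counts its transverse intersections with $U_*$. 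Hence $\fq_{[U_*]}(R_*)$ is well-defined in $\A / \phi_{[U_*]}(\pi_3 N) = \A_{[U_*]}$.

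For the inverse, I define for each $g \in \pi_1 N$ a local modification $R_* \mapsto R_* \cdot g$ by a finger-and-cusp move supported in a small ball: push a disk of $R_*$ along a loop representing $g$ and cap off, producing a track in $N \times I$ with exactly one transverse self-intersection labeled by $g$. Extending $\Z$-linearly over $\Z\pi_1 N$, one verifies that (a) $1 \in \pi_1 N$ acts by a based isotopy (the loop contracts), (b) $g + g^{-1}$ acts by a based isotopy (reversing the loop flips the sign of the double point), and (c) every element of $\phi_{[U_*]}(\pi_3 N)$ acts by a based isotopy (the $\pi_3 N$-modification is absorbed into the choice of track via well-definedness). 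This gives an action of $\A_{[U_*]}$ on $p_*^{-1}[U_*]$; additivity of $\mu_3$ under concatenation of tracks yields $\fq_{[U_*]}(U_* \cdot \alpha) = \alpha$, establishing surjectivity of $\fq_{[U_*]}$ and freeness of the action simultaneously.

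It remains to prove injectivity: if $\fq_{[U_*]}(R_*) = 0$, then after adjusting $H_*$ by an appropriate $\pi_3 N$-modification its double points can be organized into algebraically cancelling pairs suitable for Whitney moves. Applying the level-preserving codimension-$3$ Whitney move (Proposition~\ref{prop:6d-Whitney}) to each such pair produces an embedded level-preserving track $S^2 \times I \hra N \times I$, i.e., a based isotopy from $U_*$ to $R_*$. The principal difficulty lies precisely here: a generic Whitney disk in $N \times I$ will not respect the projection to $I$, so without level-preservation one only obtains a concordance, from which Hudson's theorem gives an isotopy but not one tracking the based homotopy class. Proving the level-preserving Whitney move in codimension $3$ is the technical heart of the argument.
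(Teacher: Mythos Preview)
Your proof is correct and follows essentially the same route as the paper: well-definedness via the identity $\mu_3(J_*)=\phi_{[U_*]}(A)$ for based self-homotopies (this is the paper's Lemma~\ref{lem:mu-of-based-self-homotopies}, though you elide the gluing construction $A=B_J\cup(-B_U)$ that actually proves the decomposition), surjectivity via the finger-move action (Lemma~\ref{lem:mu-action}), and injectivity via the level-preserving Whitney move of Proposition~\ref{prop:6d-Whitney}---the paper concatenates tracks for arbitrary $R_*,R_*'$ directly rather than reducing via the action to the case $\fq=0$, but this is equivalent. One minor correction to your closing remark: the paper explicitly observes that Hudson's concordance-implies-isotopy theorem would already suffice here, so the level-preserving Whitney move is carried out for self-containedness rather than because Hudson fails to respect basepoints.
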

The action of $g\in \pi_1N$ on  $U_*\in \pi_0 \Emb_*(S^2, N)$ is by a ``finger move'' along $g$, which in this setting is an ambient connected sum of $U_*$ with its meridian sphere $m_{U_*}$ along a tube following a loop representing $g$. Elements in the group ring act by multiple finger moves, which turn out to involve signs and preserve the relations in the quotient $\A$ of the group ring (see section~\ref{subsec:geo-action}). The proof of Theorem~\ref{thm:based-fq} shows the following.


\begin{cor}\label{cor:based-action} 
    The abelian group $\A$ acts on $\pi_0 \Emb_*(S^2, N)$ compatibly with the $\pi_1N$-actions, preserving $p_*$ and transitively on its fibers, with the stabilizer of $U_*$ equal to $\phi_{[U_*]}(\pi_3N)$. 
\end{cor}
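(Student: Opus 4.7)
The plan is to deduce this corollary from Theorem~\ref{thm:based-fq} by repackaging its bijection $\fq_{[U_*]}$ as a transitive group action. The first task is to define the action of the additive group $\A$ on $\pi_0\Emb_*(S^2,N)$. For $g\in\pi_1N$, the element $g\in\A$ acts on $U_*$ by a finger move as described just before the corollary: an ambient connected sum of $U_*$ with its meridian sphere $m_{U_*}$ along a tube following a loop representing $g$. Negation corresponds to reversing the orientation of $m_{U_*}$, and an arbitrary element $\sum n_i g_i\in\Z\pi_1N$ acts by performing the corresponding finger moves along pairwise disjoint tubes, which can always be arranged in the 5-dimensional ambient space $N$.

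To see that this descends to a well-defined action of $\A$, I would verify that the geometric moves respect the defining relations of the quotient $\Z\pi_1N/\langle g+g^{-1},1\rangle$. Three points need attention: (i) finger moves along homotopic loops produce based-isotopic results, by a standard isotopy sliding the tube through the homotopy; (ii) the element $1\in\pi_1N$ acts trivially, because a finger move along a null-homotopic loop sums $U_*$ with $m_{U_*}$ along a contractible tube, and $m_{U_*}$ bounds a 3-ball (a normal disk fiber) in $N$, so the resulting connected sum is based isotopic to $U_*$; (iii) $g$ and $-g^{-1}$ act in the same way, which is seen by exhibiting an explicit Whitney disk in $N$ pairing the two oppositely oriented meridians. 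I expect step (iii) to be the main technical obstacle, since it requires a careful sign analysis of the meridian orientation to guarantee that exactly the relations $1$ and $g+g^{-1}$ (and no more) are forced by the geometry.

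Granted a well-defined action, preservation of $p_*$ is immediate: each finger move alters $U_*$ by connect-summing with a sphere bounding a 3-ball in $N$, so the based homotopy class is unchanged. Transitivity on the fibers of $p_*$ and the identification of the stabilizer of $U_*$ are then direct consequences of Theorem~\ref{thm:based-fq}. Indeed, the theorem asserts that $\fq_{[U_*]}$ is a bijection whose inverse is this very action, so each $R_*\in p_*^{-1}[U_*]$ equals $\alpha\cdot U_*$ for any lift $\alpha\in\A$ of $\fq_{[U_*]}(R_*)\in\A_{[U_*]}$; and $\alpha\cdot U_*=U_*$ holds in $\pi_0\Emb_*$ if and only if $\fq_{[U_*]}(\alpha\cdot U_*)=0$, if and only if $\alpha\in\phi_{[U_*]}(\pi_3N)$, which is therefore precisely the stabilizer.

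Finally, for compatibility with the $\pi_1N$-actions on both sides, I would note that $\pi_1N$ acts on $\A$ by conjugation on $\Z\pi_1N$ (which preserves the subgroup $\langle g+g^{-1},1\rangle$) and on $\pi_0\Emb_*(S^2,N)$ by change of basepoint path. Since both the $\pi_1N$-action and each finger move are implemented by tubing along based loops in $N$, one can arrange the relevant tubes to be pairwise disjoint; changing a basepoint path then conjugates the loops defining the finger moves in $\pi_1N$, yielding the equivariance formula $h\cdot(\alpha\cdot U_*)=(h\alpha h^{-1})\cdot(h\cdot U_*)$, which is the required compatibility.
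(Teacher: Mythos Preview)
Your plan is essentially the paper's own: the geometric action is defined exactly this way in section~\ref{subsec:geo-action}, the relations $g+g^{-1}=0=1$ are checked by explicit isotopies (Figure~\ref{geo-action-fig}), Lemma~\ref{lem:mu-action} supplies a homotopy $H_a$ from $a\cdot U_*$ to $U_*$ with $\mu_3(H_a)=a$, and then transitivity and the stabilizer fall out of the bijection in Theorem~\ref{thm:based-fq} just as you outline.

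One terminological slip worth flagging: in your step (iii), ``exhibiting an explicit Whitney disk in $N$ pairing the two oppositely oriented meridians'' is not the right picture---Whitney disks pair transverse double points, not spheres. The relation $g=-g^{-1}$ is realized more directly: a tube along a representative of $g$ into $+m_{U_*}$, read from its far end back, is a tube along $g^{-1}$ into a meridian sphere, and an orientation check shows that meridian is $-m_{U_*}$. So the two finger moves produce literally the same embedded sphere up to an isotopy sliding the tube; no cancellation argument or Whitney move is involved. This is what Figure~\ref{geo-action-fig} depicts, and it is simpler than what you anticipate.
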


\subsection{The free isotopy invariant}\label{intro:free-isotopy}
Now consider \emph{free} homotopy versus isotopy, i.e.\ the set:
\[
    p^{-1}[U] \coloneqq \faktor{\big\{R\colon S^2\hra N \mid R \textrm{ is freely homotopic to }  U\big\}}{\text{free isotopy}}
\]
for $U\colon S^2\hra N$ a fixed embedding in $\Emb(S^2,N)\subset\Map(S^2,N)$, and choose $U(e)\in N$ as the basepoint for $N$ (where $e$ denotes the basepoint for $S^2$).

To define the target of an invariant that characterizes $p^{-1}[U]$ we will define an
affine action on $\A_{[U_*]}$ (the range of the bijection in the based setting of Theorem~\ref{thm:based-fq}) by the group
\[
    \Stab[U_*]\coloneqq \{s\in\pi_1N\colon s\cdot[U_*]=[U_*]\},
\]
that is the stabilizer subgroup of $[U_*]\in\pi_2N$ of the usual action of $\pi_1N$ on $\pi_2N$.

Recall that an \emph{affine transformation} $T$ of an abelian group $A$ is given by an endomorphism $\ell$ and a translation $a_0$ of $A$, i.e.\ $T(a) = a_0 + \ell(a)$, where $a_0=T(0)$. An \emph{affine action} of a group on $A$ is a homomorphism to the group of affine transformations of $A$. In our case, the linear action of $s\in \Stab[U_*]$ will be $a\mapsto sas^{-1}$, whereas the translational part will be given by $U_s$, both of which we explain next.

Firstly, we claim that the linear action $(s,a)\mapsto s as^{-1}$ of $\Stab[U_*]$ on  $\Z\pi_1N$ descends to $\A_{[U_*]}$: for $A\in\pi_3N$ we have $\mu_3(g\cdot A) = g \mu_3(A) g^{-1}$ and $\lambda_N(g\cdot A, [U_*]) = g\lambda_N(A, [U_*])$, so if $g\cdot [U_*]=[U_*]$ then the last expression also equals $g\lambda_N(A,  [U_*])g^{-1}$, implying $g\phi_{[U_*]}(A)g^{-1}=\phi_{[U_*]}(g\cdot A)\in \phi_{[U_*]}(\pi_3N)$. 

Secondly, for $s\in \Stab[U_*]$ there is a generic track $J_s\colon S^2 \times I \imra N\times I$ of a free self-homotopy of $U_*$ such that the projection of $J_s(e,-)$ to $N$ represents $s$. It turns out that
\[
    U_s \coloneqq [\mu_3(J_s)] \in \A_{[U_*]}
\]
only depends on $s$ and the isotopy class $U_*$ (and not on $J_s$, see Lemma~\ref{lem:J_s}). It is easy to show that under concatenation, for $s,r\in \Stab[U_*]$ this behaves as:
\[
    U_{s\cdot r} = U_s + sU_rs^{-1}.
\]
This implies that the formula
\begin{equation}\tag{aff}\label{eq:aff}            
    {}^sa \coloneqq U_s + sas^{-1} 
\end{equation}
satisfies
\begin{equation}
    {}^{sr}a\coloneqq U_{sr} + srar^{-1}s^{-1}= 
    U_s+sU_rs^{-1} + srar^{-1}s^{-1}
    = {}^s(U_r+rar^{-1})= {}^s({}^ra).
\end{equation}
In other words, the composition in the group $\Stab[U_*]$ turns into the composition of affine transformations, so $s\mapsto {}^sa$ defines an affine action of $s\in\Stab[U_*]$ on $a\in\A_{[U_*]}$. 

\begin{defn}\label{def:unbased-A_F}
    Denote by $\A_{U_*}$ the quotient set of this affine action by $\Stab[U_*]$ on $\A_{[U_*]}$. 
\end{defn}

Note that the definition of $\A_{U_*}$  depends on the based isotopy class $U_*$, a fixed basing of $U$. However, the following result shows that it 
characterizes the set of \emph{free} isotopy classes of embeddings homotopic to $U$. 
\begin{thm}\label{thm:free}
    There is a bijection
    \begin{align*}
        \fq_{U_*}\colon p^{-1}[U] &\ra \A_{U_*}\\
        R&\mapsto[\mu_3(H)],
    \end{align*}
    where $H$ is a generic track of any free homotopy from $U$ to $R$. 
\end{thm}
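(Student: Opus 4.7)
The plan is to deduce Theorem~\ref{thm:free} from the based classification of Theorem~\ref{thm:based-fq}. Forgetting the basing defines a map $p_*^{-1}[U_*]\twoheadrightarrow p^{-1}[U]$ which is surjective: any $R\in p^{-1}[U]$ admits some basing $R_*$ with $[R_*]=g\cdot[U_*]\in\pi_2 N$ for some $g\in\pi_1 N$, and tubing $R_*$ along $g^{-1}$ produces a freely isotopic based representative in $p_*^{-1}[U_*]$. Two based embeddings in $p_*^{-1}[U_*]$ are freely isotopic if and only if they are related by tubing along some $s\in\pi_1 N$, and such a tubing stays inside $p_*^{-1}[U_*]$ exactly when $s\in\Stab[U_*]$. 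Hence $p^{-1}[U]\cong p_*^{-1}[U_*]/\Stab[U_*]$, and Theorem~\ref{thm:based-fq} identifies the right-hand side with the quotient of $\A_{[U_*]}$ by some induced action of $\Stab[U_*]$.

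The crux is to identify this induced action with the affine action~\eqref{eq:aff}, i.e.\ to prove
\[
    \fq_{[U_*]}(s\cdot R_*) \;=\; U_s + s\cdot\fq_{[U_*]}(R_*)\cdot s^{-1}\quad\text{in } \A_{[U_*]}.
\]
For this I would build a based track $U_*\leadsto s\cdot R_*$ as a generic concatenation $J_s\# H_*\# I$ in $N\times I$: here $J_s$ is a free self-homotopy track of $U_*$ with basepoint trajectory representing $s$ (so $[\mu_3(J_s)]=U_s$), $H_*$ is a based track of a homotopy $U_*\leadsto R_*$ (so $[\mu_3(H_*)]=\fq_{[U_*]}(R_*)$), and $I$ is the embedded track of a tubing isotopy $R_*\leadsto s\cdot R_*$ with basepoint trajectory $s^{-1}$ and vanishing $\mu_3$. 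The accumulated trajectory $s\cdot 1\cdot s^{-1}$ is null-homotopic, so a small regular homotopy of the track makes it a genuine based homotopy without altering $\mu_3$. The central bookkeeping observation is the conjugation rule: in a concatenated track $A\# B$, the $\pi_1$-labels of double points in $B$ are conjugated by the basepoint trajectory of $A$. Applying this to $J_s\# H_*\# I$ yields $\mu_3 = U_s + s\cdot\mu_3(H_*)\cdot s^{-1} + s\cdot 0\cdot s^{-1}$, establishing the identity and hence a bijection $p^{-1}[U]\xrightarrow{\cong}\A_{U_*}$.

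It remains to verify that this bijection agrees with the free-homotopy formula $R\mapsto[\mu_3(H)]$. After an ambient isotopy assume $R(e)=n_0$ and choose $R_*\in p_*^{-1}[U_*]$; the basepoint trajectory of $H$ represents some $s\in\Stab[U_*]$, and $H$ differs from the concatenation $J_s\# H_*$ by a based self-homotopy of $U_*$, whose $\mu_3$-contribution lies in $\phi_{[U_*]}(\pi_3 N)\subset\A$ and so vanishes in $\A_{[U_*]}$. The same conjugation rule then gives $\mu_3(H) = U_s + s\cdot\mu_3(H_*)\cdot s^{-1} = {}^s\fq_{[U_*]}(R_*)$ in $\A_{[U_*]}$, whose class in $\A_{U_*}$ equals that of $\fq_{[U_*]}(R_*)$, independently of all choices; injectivity and surjectivity of $\fq_{U_*}$ then follow from Theorem~\ref{thm:based-fq}. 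The main obstacle will be the rigorous justification of the conjugation rule for concatenated tracks together with the invariance of $\mu_3$ under the regular homotopy that rectifies a null-homotopic trajectory; both follow from general position combined with the standard invariance of $\mu_3$ under regular homotopy of immersed $3$-manifolds in $6$-manifolds.
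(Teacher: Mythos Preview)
Your argument is correct, but it is organized differently from the paper's proof in section~\ref{sec:free-setting}. The paper proves directly that $\fq_{U_*}$ is well defined, injective, and surjective: well-definedness comes from concatenating two homotopies $H,H'$ into a self-homotopy $J_s=H\cup -H'$ and applying the conjugation rule (Lemma~\ref{lem:basic-concatenation}); injectivity is proved by building $H''=-H'\cup J_s\cup H$ with $\mu_3(H'')=0\in\A_{[U_*]}$ and invoking Lemma~\ref{lem:homotopies-to-isotopies} directly; surjectivity is Lemma~\ref{lem:mu-action}. You instead factor through Theorem~\ref{thm:based-fq}: first identify $p^{-1}[U]$ with $p_*^{-1}[U_*]/\Stab[U_*]$ via the $\pi_1N$-action on based isotopy classes, then compute that the induced $\Stab[U_*]$-action on $\A_{[U_*]}$ is exactly the affine action~\eqref{eq:aff}, and finally check that the resulting bijection is given by the free-homotopy formula.

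Both routes hinge on the same conjugation rule for concatenated tracks (your ``central bookkeeping observation'' is the paper's Lemma~\ref{lem:basic-concatenation}), and both ultimately rely on Lemma~\ref{lem:homotopies-to-isotopies} for injectivity---yours through the injectivity in Theorem~\ref{thm:based-fq}, the paper's explicitly in the free setting. Your approach is more structural and makes transparent that $\A_{U_*}$ really is the quotient of the based target by a group action; the cost is the extra final paragraph verifying that the abstract bijection agrees with $R\mapsto[\mu_3(H)]$, which the paper avoids by taking that formula as the definition from the start. One small point to tighten: the identification $p^{-1}[U]\cong p_*^{-1}[U_*]/\Stab[U_*]$ rests on the assertion that the left vertical map in the square of section~\ref{subsec:intro-square-diagram} is precisely the quotient by the $\pi_1N$-tubing action, which the paper states but does not prove; your argument uses both directions of this, so a sentence justifying it (via the free isotopy dragging the basepoint) would be worthwhile.
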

Here both the computation of $\mu_3(H)$ and the definition of $\A_{U_*}$ use the same basing $U_*$, and the inverse of the bijection is defined using the same geometric action as in Corollary~\ref{cor:based-action}.

Curiously, in the based setting of Theorem~\ref{thm:based-fq}, both the target $\A_{[U_*]}$ and the set $p_*^{-1}[U_*]$ only depend on the based homotopy class $[U_*]\in\pi_2N$, whereas in the free setting of Theorem~\ref{thm:free}, the target $\A_{U_*}$ depends on the \emph{based isotopy} class $U_*\in\pi_0 \Emb_*(S^2, N)$, 
while the set $p^{-1}[U]$ depends on the \emph{free homotopy} class $[U]\in\pi_0\Map(S^2, N)$ of the embedding $U$.

\subsubsection*{Example}
If $U_*$ is the trivial 2-sphere then $U_s=0$ for all $s \in\Stab[U_*]= \pi_1N$ because any self-homotopy $J_s$ can be chosen to be a self-isotopy. As a consequence:

\begin{cor}[Null-homotopic isotopy classes]\label{cor:null-homotopic}
    Null-homotopic free isotopy classes of 2-spheres in $N$ are in bijection with $\A_{U_*}={\A_{[U_*]}}/{\pi_1N}$, the quotient of the abelian group
    \[
        \A_{[U_*]}=\faktor{\Z[\pi_1N]}{\langle 1, g+g^{-1}, \mu_3(A):g\in\pi_1N,A\in\pi_3N\rangle},
    \]
    by the conjugation action of $\pi_1N$.
\end{cor}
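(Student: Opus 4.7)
The plan is to reduce Corollary~\ref{cor:null-homotopic} to Theorem~\ref{thm:free} by evaluating each piece of the invariant $\fq_{U_*}$ in the special case that $U_*$ is null-homotopic. Since $[U_*]=0\in\pi_2 N$ is fixed by the standard $\pi_1 N$-action on $\pi_2 N$, we immediately have $\Stab[U_*]=\pi_1 N$. Next, for every $A\in\pi_3 N$ the intersection pairing $\lambda_N(A,[U_*])$ vanishes, so the homomorphism $\phi_{[U_*]}$ of \eqref{eq-def:phi} reduces to $\mu_3\colon\pi_3 N\to\A$. This identifies $\A_{[U_*]}$ with the quotient of $\Z[\pi_1 N]$ by $\langle 1,\ g+g^{-1},\ \mu_3(A)\rangle$ displayed in the statement.

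The core step is to show that $U_s=0\in\A_{[U_*]}$ for every $s\in\pi_1 N$, so that the affine action \eqref{eq:aff} collapses to ${}^sa = sas^{-1}$ and $\A_{U_*}$ becomes the orbit set of $\A_{[U_*]}$ under plain conjugation. For this, after a based isotopy we may assume that $U_*$ is a small standard $2$-sphere in a coordinate chart around $U(e)$. Represent $s\in\pi_1 N$ by a smoothly embedded based loop $\gamma$ in $N$ (possible since generic self-intersections of a map $S^1\to N^5$ are empty), and let $J_s$ be the self-isotopy of $U_*$ obtained by dragging a small neighborhood of $U_*$ once around $\gamma$ via an ambient isotopy of $N$ supported in a tubular neighborhood of $\gamma$. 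Then the basepoint $e$ traces out $\gamma$, while the track $S^2\times I\hookrightarrow N\times I$ is an embedding; hence $\mu_3(J_s)=0\in\A$, and Lemma~\ref{lem:J_s} gives $U_s=[\mu_3(J_s)]=0$ independently of the choice.

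Combining the previous two paragraphs with Theorem~\ref{thm:free} yields the claimed bijection between the null-homotopic component $p^{-1}[U]$ and $\A_{[U_*]}/\pi_1 N$ under the conjugation action. The only nontrivial step is the construction of the self-isotopy $J_s$, which I expect to be the main (though mild) obstacle: one must verify that the transport genuinely extends to an ambient isotopy of $N$ rather than merely an ambient homotopy, which is accomplished by working inside a trivializable tubular neighborhood of the embedded loop $\gamma$. Everything else is a direct substitution into the definitions of Section~\ref{intro:free-isotopy}.
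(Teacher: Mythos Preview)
Your approach is essentially the paper's: take $U_*$ to be the trivial sphere so that $\Stab[U_*]=\pi_1N$, observe $\phi_{[U_*]}=\mu_3$ since $\lambda_N(-,0)=0$, and exhibit each $J_s$ as a self-isotopy (dragging the small sphere around a loop) to force $U_s=0$, collapsing the affine action to conjugation.

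One phrasing needs correction. You write ``after a based isotopy we may assume that $U_*$ is a small standard $2$-sphere.'' Taken literally this is circular: whether an arbitrary null-homotopic embedded $2$-sphere is based-isotopic to the trivial one is precisely governed by the invariant you are computing, and Example~\ref{subsubsec:example-free-target-depends-on-isotopy-class} shows that $U_s$ genuinely depends on the based isotopy class of $U_*$, not just on $[U_*]$. The fix is immediate and is what the paper does: Theorem~\ref{thm:free} lets you \emph{choose} any embedding in the free homotopy class as your reference, so simply take $U_*$ to be the trivial sphere from the outset rather than claiming to isotope to it. With that adjustment your argument is complete and matches the paper.
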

Note that although the free self-isotopies of $U$ have vanishing self-intersection invariants, 
they still contribute to the indeterminacy of the invariant $\fq_{U_*}$ by arbitrarily conjugating double-point group elements in the computation of $\mu_3(H)$. 

\subsubsection*{Example}
If $\lambda_N(G, [U_*])=1$ for some $G\in\pi_3N$ such that $\mu_3(G)=0\in\A$,
then 
$\phi_{[U_*]}(\pi_3N)=\A$; since for any $\sum g_i\in\Z[\pi_1N]$ we have
\[
    \phi_{[U_*]}\left(\sum g_i \cdot G\right)=0+
    \left[\lambda_N(\sum g_i \cdot G,[U_*])\right]=
    \left[\sum g_i \cdot\lambda_N(G,[U_*])\right]=
    \left[\sum g_i\right].
\]
It follows that in this case $\A_{[U_*]}$ contains a single element, and hence so does $\A_{U_*}$.

\begin{cor}[5-dimensional Light Bulb Theorem]\label{cor:5d-LBT}
    ``Homotopy implies isotopy'' for spheres $U_*\colon S^2\hra N^5$ admitting an ``algebraic dual'' $G\in\pi_3N$ as above.
\end{cor}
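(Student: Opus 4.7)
The plan is to deduce the corollary directly from Theorem~\ref{thm:free} by showing that the target $\A_{U_*}$ of the isotopy-classifying bijection collapses to a single element under the algebraic dual hypothesis. Since $\A_{U_*}$ is a quotient of $\A_{[U_*]}$ by the affine $\Stab[U_*]$-action, it is enough to prove that $\A_{[U_*]} = \A / \phi_{[U_*]}(\pi_3 N)$ is already trivial, i.e.\ that the homomorphism $\phi_{[U_*]}\colon \pi_3 N \to \A$ is surjective.

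To establish surjectivity, I would exploit the $\Z[\pi_1 N]$-module structure on $\pi_3 N$ together with the hypothesis that there exists $G \in \pi_3 N$ with $\lambda_N(G, [U_*]) = 1$ and $\mu_3(G) = 0 \in \A$. For an arbitrary element $x = \sum n_i g_i \in \Z[\pi_1 N]$, I would consider the class $x \cdot G \in \pi_3 N$ and compute
\[
    \phi_{[U_*]}(x \cdot G) = \mu_3(x \cdot G) + [\lambda_N(x \cdot G, [U_*])].
\]
Using equivariance of $\lambda_N$, the second term evaluates to $[x \cdot 1] = [x] \in \A$. For the first term, I would use that $\mu_3\colon \pi_3 N \to \A$ is a group homomorphism and is $\Z[\pi_1 N]$-equivariant in the sense that $\mu_3(g \cdot A) = g\,\mu_3(A)\,g^{-1}$, so each summand $\mu_3(g_i \cdot G) = g_i \cdot 0 \cdot g_i^{-1}$ vanishes; additivity then gives $\mu_3(x \cdot G) = 0$, once one accounts for the mixed intersection contributions produced by summing immersions (these cancel in $\A$ thanks to the relation $g + g^{-1} = 0$). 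This shows that $\phi_{[U_*]}$ hits every coset $[x] \in \A$, proving $\A_{[U_*]} = 0$.

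With $\A_{[U_*]}$ trivial, $\A_{U_*}$ is automatically trivial, so by Theorem~\ref{thm:free} the fiber $p^{-1}[U]$ is a singleton. In particular, any embedding $R\colon S^2 \hra N$ freely homotopic to $U$ represents the same free isotopy class as $U$, which is precisely the claim of the corollary.

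The only step requiring a bit of care is the additivity argument for $\mu_3$ on sums in $\pi_3 N$, since a priori the self-intersection invariant of a sum of immersions picks up mutual intersection terms. I expect this to be the main—though mild—obstacle, resolved by the definition of $\A$ as the quotient of $\Z\pi_1 N$ by $\langle 1, g + g^{-1}\rangle$, which is precisely designed so that $\mu_3$ becomes a homomorphism (as is implicit in the paper's assertion that $\phi_{[U_*]}$ is a homomorphism of abelian groups). Beyond that, the corollary is a clean consequence of Theorem~\ref{thm:free} and the module-theoretic computation sketched just before its statement.
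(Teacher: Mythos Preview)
Your approach is exactly the one the paper uses (the computation is displayed inline just before the corollary): show $\phi_{[U_*]}$ is surjective by evaluating it on $x\cdot G$, conclude $\A_{[U_*]}=0$, hence $\A_{U_*}=0$, and apply Theorem~\ref{thm:free}.

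One technical point deserves correction. You attribute the vanishing of the mixed terms in $\mu_3\big(\sum g_i\cdot G\big)$ to the relation $g+g^{-1}=0$ in $\A$. That is not the reason: the cross term in $\mu_3(A+B)=\mu_3(A)+\mu_3(B)+[\lambda_3(A,B)]$ is a single $\lambda_3$-contribution, and the relation $g+g^{-1}=0$ does not kill it. What actually makes $\mu_3\colon\pi_3N\to\A$ (and hence $\phi_{[U_*]}$) a homomorphism is that $\lambda_3$ is computed in the $6$-manifold $N\times I$, where any two classes in $\pi_3(N\times I)\cong\pi_3N$ can be represented by maps landing in disjoint $I$-levels; thus $\lambda_3(A,B)=0$ for all $A,B\in\pi_3N$. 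With this correction your argument is complete and coincides with the paper's.
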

See section~\ref{sec:examples} for more examples.

%
\subsection{Isotopy classification via mapping spaces}\label{subsec:intro-Dax}
In section~\ref{sec:Dax-approach} we present a slightly different perspective to the problem of isotopy classification.
Namely, the fibers of the map $p\colon \pi_0\Emb(S^2,N)\to \pi_0\Map(S^2,N)$ can also be determined using the homotopy exact sequence associated to the inclusion of mapping spaces 
$\Emb(S^2,N)\subset\Map(S^2,N)$:
\[\begin{tikzcd}[column sep=small,nodes={scale=0.94}]
    \pi_1(\Map(S^2,N), U)\rar{j} &  \pi_1(\Map(S^2,N),\Emb(S^2,N), U)\rar &
    \pi_0\Emb(S^2,N)\rar[two heads]{p} &
    \pi_0\Map(S^2,N). 
\end{tikzcd}
\]
Here we picked an embedding $U\colon S^2\hra N$ as a basepoint, and the leftmost absolute $\pi_1$ is a group that acts on the relative $\pi_1$ (which is just a set) such that the quotient set is isomorphic to the fiber $p^{-1}[U]$ of $p$ over $[U]\in\pi_0\Map(S^2,N)$. 

The relative $\pi_1$ is the first non-vanishing relative homotopy group, and that is exactly what was computed by Jean-Pierre Dax \cite{Dax}. 
He translated this (and also other relative homotopy groups in the ``metastable range'') to certain bordism groups.
Computations of this bordism group (which is 0-dimensional in the first non-vanishing case) were carried out in \cite{KT-highd}, for the cases $(\Map_\partial(V,X),\Emb_\partial(V,X))$ when embeddings have nonempty boundary condition, and $V$ is 1-connected. In \cite{KST-Dax} we extend this to closed and disconnected manifolds,
and specializing \cite{KST-Dax} to $V=S^2$ and $d=5$ leads to Theorem~\ref{thm:main-Dax-part}: there is a bijection
\begin{align}\label{eq-intro:Dax}
    \Da \colon \pi_1(\Map(S^2,N),\Emb(S^2,N), U) 
    &\ra \A\\
    h &\mapsto \mu_3(H),\nonumber
\end{align}
where $H$ is a homotopy from $U$ to an embedding that represents $h$, and $\mu_3(H)\in\A$ is the self-intersection invariant of a generic track $H$ of the homotopy, as in~\eqref{eq:mu3-imm} above.

In order to compute the set $p^{-1}[U]$ from this viewpoint it remains to understand the action of the absolute $\pi_1$ on the relative $\pi_1$ in the sequence displayed above. Firstly, we use the fibration sequence $\Map_*(S^2,N)\to \Map(S^2,N)\to N$, where the first map is the inclusion $i$, and the second map evaluates at the basepoint $e\in S^2$, to obtain the exactness in the column of the following diagram:
    \begin{equation}\label{eq:big-diagram}
        \begin{tikzcd}
       \pi_3N\cong\pi_1\big(\Map_*(S^2,N),U_*\big)\dar{i} & & \\
        \pi_1\big(\Map(S^2,N), U\big)\rar{j}\dar[two heads]{ev_e} & \pi_1\big(\Map(S^2,N),\Emb(S^2,N), U\big) \rar{\Da}[swap]{\cong} & \A\\
        \Stab[U_*] &  &
    \end{tikzcd}
    \end{equation}
By definition the composite $\Da\circ j\circ i$  sends $\beta\in\pi_3N$ to $\Da(A)$ where $A$ is a loop in $\Map_*(S^2,N)$, based at $U$. We will see that this precisely agrees with $\phi_{[U_*]}$
from \eqref{eq-def:phi}.

Moreover, we will see that the induced action of $s\in\Stab[U_*]$ on the quotient of $\A$ by $\Da\circ j\circ i(\pi_3N)$
sends $a=\Da(H)$ to
    \[
        \Da(J_s)+s a s^{-1},
    \]
where $J_s$ is free self-homotopy of $U_*$ such that $ev_e(J_s)=s$, i.e.\ the projection of $J_s(e,-)$ to $N$ represents $s$. This action is precisely~\eqref{eq:aff}, so we recover:
\[
    p^{-1}[U] \cong \A_{U_*}
\]
as in Theorem~\ref{thm:free}, except that instead of $\fq_{U_*}$ this map is now naturally called $\Da$. This will be stated as Theorem~\ref{thm:main-Dax-part-2}.

\begin{rem}
    Using the analogous fibration sequence $\Emb_*(S^2,N)\to \Emb(S^2,N)\to N$,  we show in \cite{KST-Dax} that there is an isomorphism
\[\begin{tikzcd}
    i^{rel}\colon \pi_1(\Map_*(S^2,N),\Emb_*(S^2,N), U_*)
    \rar{\cong}
    & \pi_1(\Map(S^2,N),\Emb(S^2,N), U).
\end{tikzcd}
\] 
    In Theorem~\ref{thm:main-Dax-part-1}  we will show that  $\Da\circ i^{rel}\circ j_*=\phi_{[U_*]}$ precisely gives the indeterminacy for the based setting: $p_*^{-1}[U_*]\cong\A/\phi_{[U_*]}(\pi_3N)$, as in Theorem~\ref{thm:based-fq}.
    Moreover, this shows that $\Da\circ i^{rel}\circ j_*=\Da\circ j\circ i$.
    Therefore, similarly to our first approach, in this approach we see: a linear action for the based setting (just the quotient by the image of $\phi_{[U_*]}$), and an affine action for the free setting (the further quotient by the action \eqref{eq:aff}). 
\end{rem}


\subsection*{Acknowledgements} RS was supported by a Simons Foundation \emph{Collaboration Grant for Mathematicians}. All the authors thank the Max Planck Institute for Mathematics in Bonn, 
as well as SRS SwissMAP Research Station in Les Diablerets, for support during this project.

\tableofcontents

\section{Intersection invariants and homotopies}\label{sec:int-invariants-and-homotopies}

\subsection{3-manifolds in 6-manifolds}\label{subsec:3-in-6}

Recall that for a smooth oriented 6-manifold $P^6$, the intersection and self-intersection  invariants give maps
\[
    \lambda_3\colon \pi_3P \times \pi_3P \to  \Z\pi_1P  \quad\text{ and }\quad \mu_3\colon \pi_3P  \to  \Z\pi_1P / \langle  g+g^{-1}, 1 \rangle.
\]
To compute $\lambda_3$ geometrically, start by representing the two homotopy classes by transverse based maps $S^3 \to P$, and then count each intersection point $p$ with a sign $\epsilon_p$ determined by orientations and a group element $g_p\in\pi_1P$ represented by a sheet-changing loop through $p$.
Here a \emph{based map} is equipped with a \emph{whisker}, which is an arc running between a basepoint on the image of the map and the basepoint of the ambient manifold $P$. Note that by general position a map of a manifold of codimension $>1$ is ambient isotopic to a map whose basepoint is equal to the basepoint of the ambient manifold.  

Similarly, for $\mu_3$ one represents the homotopy class by a generic map $A\colon S^3\imra P$ and counts self-intersections, again with signs and group elements. In this dimension, switching the ordering of sheets at a double point $p$ changes $\e_p$ to $-\e_p$, and changes $g_p$ to $g_p^{-1}$, explaining the relation $g+g^{-1} =0$ in the range of $\mu_3$. The relation $1=0$ makes $\mu_3(A)$ only depend on the homotopy class of $A$, since a cusp homotopy introduces a double point with arbitrary sign and trivial group element. Changing the whisker on $A$ changes $\mu_3(A)$ by a conjugation, with the corresponding group element represented by the difference of the whiskers. 
The argument for homotopy invariance of $\mu_3$ arises from considering the double-point arcs and circles of the track of a generic homotopy $S^3 \times  I\imra P^6 \times  I$ of $A$. 

Using the involution $\bar g\coloneqq g^{-1}$ on $\Z\pi_1P$, the ``quadratic form'' $(\lambda_3, \mu_3)$ satisfies the  formulas
\begin{equation}\label{eq:mu-lambda}
    \mu_3(A+B)= \mu_3(A) + \mu_3(B) +[\lambda_3(A,B)]  \quad\text{and} \quad\lambda_3(A,A) = \mu_3(A) -\overline{\mu_3(A)} \in \Z\pi_1P
\end{equation}
where the second formula has no content for the coefficient at the trivial element in $\pi_1P$: Since $\lambda_3$ is skew-hermitian, it vanishes on the left hand side, whereas it is automatically zero on the right hand side that is defined by picking a representative of $\mu_3(A)\in\Z\pi_1P$ and then applying the involution to that specific choice.

We will be interested in the case that $P=N\times I$ is the product of a $5$--manifold $N$ with an interval $I$,
and we denote the target of $\mu_3$ by 
\[
    \A\coloneqq 
    \faktor{\Z\pi_1N}{\langle g+g^{-1} , 1 \rangle}
    \cong \faktor{\Z\pi_1P}{\langle  g+g^{-1}, 1 \rangle}.
\]

\subsection{The self-intersection invariant for homotopies of 2-spheres in 5-manifolds}\label{sec:homotopies}

The above descriptions of $\lambda_3$ and $\mu_3$ can also be applied to properly immersed simply-connected $3$--manifolds with boundary in a $6$--manifold. In this setting the invariants are computed just as above, by summing signed double point group elements, and are invariant under homotopies that restrict to isotopies on the boundary. 

For a smooth oriented 5-manifold $N$, and any homotopy $H\colon S^2 \times I \to N$ between embedded spheres, we define the self-intersection invariant 
\begin{equation}\label{eq-def:mu3-H}
    \mu_3(H)\in\A
\end{equation}
to be the self-intersection invariant of a generic track $S^2 \times I \imra N \times I$ for $H$. We will sometimes use the same letter $H$ to denote either the homotopy or its track when the context is clear.

The ``time'' parameter (the $I$-factor) of a homotopy will generally be assumed to be the unit interval $I=[0,1]$, although frequently suppressed from notation and/or reparametrized without mention.

For the purpose of computing the self-intersection invariant $\mu_3(H)$, the whisker on the track of $H$ will be assumed to be taken at the ``start'' $H(S^2\times 0)\subset N\times 1\subset N\times I$ of the homotopy unless explicitly stated otherwise. So for a homotopy $H$ from $U_*\colon S^2\hra N$ to an embedding, the whisker for $U_*$ will generally be used to compute $\mu_3(H)$.

Note that  
choosing a whisker on the track of a homotopy to provide a ``basing'' for the purposes of computing an intersection invariant is different than saying that the homotopy is a ``based homotopy'', which is ``a homotopy through based maps''.

\subsection{Geometric action of \texorpdfstring{$\A$}{A}}\label{subsec:geo-action}

For $g\in \pi_1N$ and $U_*\colon S^2\hra N$, we define $g\cdot  U_*\colon S^2\hra N$ as follows, see Figure~\ref{geo-action-fig} for several examples. Firstly, note that the normal bundle of $U_*$ is 3-dimensional, so its meridian $m_F$ is a 2-sphere; we choose it over a point near the basepoint $z\coloneqq U_*(e)$ and orient it according to the orientations of $S^2$ and $N$. We then define $g\cdot  U_*$ as an ambient connected sum of $U_*$ with $m_F$ along a tube following an arc representing $g$, where the arc starts and ends near $z$ and has interior disjoint from $U_*$.
\begin{figure}[!htbp]
        \centerline{\includegraphics[width=0.9\linewidth]{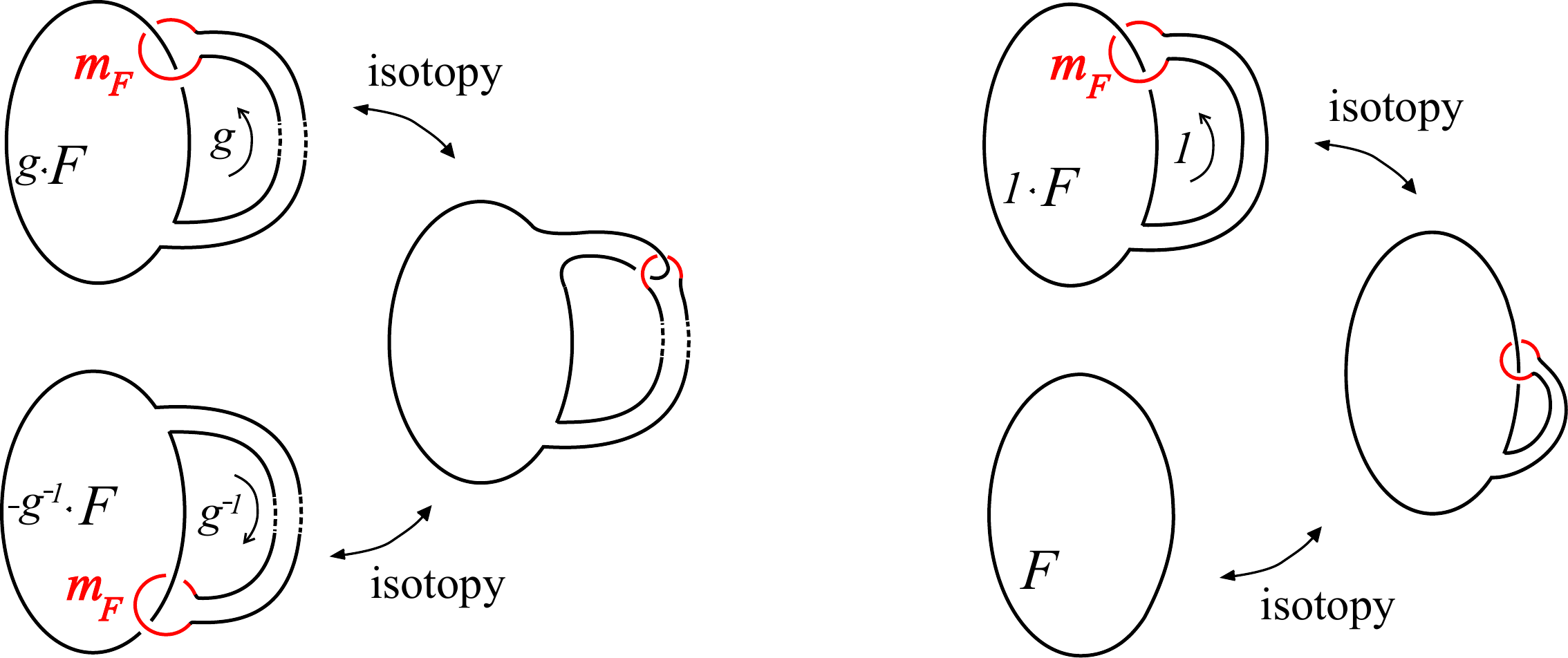}} 
        \caption{The relations $g=-g^{-1}$ and $1=0$ realized by isotopies.}
        \label{geo-action-fig}
\end{figure}

Similarly, $(-g)\cdot  U_*$ is defined to be the connected sum of $U_*$ with the oppositely oriented meridian sphere $-m_F$.
Linear combinations $\sum_i n_i g_i$ act by multiple connected sums along $g_i$ into copies of $m_F$ for $n_i>0$ respectively $-m_F$ for $n_i<0$. It is not hard to check that the relations $g+g^{-1} = 0 =1$ carry over to isotopies of these connected sums, see Figure~\ref{geo-action-fig}. Therefore, we have an action of $\A$ on the set $\pi_0\Emb_*(S^2,N)$.

Since each meridian sphere $m_F$ bounds a normal 3-ball that intersects $U_*$ exactly once, 
we get the following result.
\begin{lem}\label{lem:mu-action}
    For any $a\in\A$, shrinking the meridian spheres along their 3-balls gives a based homotopy $H_a$ from $a\cdot  U_*$ to $U_*$ with $\mu_3(H_a)=a$. 
    $\hfill\square$
\end{lem}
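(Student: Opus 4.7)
The plan is to first reduce the statement to the case where $a$ is a single generator $\pm g\in\pi_1N$, and then to explicitly describe the track of the shrinking homotopy and read off $\mu_3$ from a single double point.

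First, I would set up additivity. Write $a=\sum_i \epsilon_i g_i$ with $\epsilon_i\in\{\pm 1\}$. The embedding $a\cdot U_*$ is built by a sequence of ambient connected sums with meridian spheres $\pm m_F$ along disjoint tubes following arcs representing $g_i$. The homotopy $H_a$ that simultaneously shrinks each meridian within its own normal $3$-ball can be realized as a concatenation (in $I$) of the individual shrinking homotopies $H_{\epsilon_i g_i}$, performed in disjoint regions of $N$. Since the tracks of the individual stages are pairwise disjoint in $N\times I$ (each lives in a small neighborhood of its own tube and $3$-ball), the double points of the full track of $H_a$ are exactly the union of the double points of the $H_{\epsilon_i g_i}$, and the contributions add in $\A$. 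Thus it suffices to prove $\mu_3(H_g)=g$ and $\mu_3(H_{-g})=-g$ for a single $g\in\pi_1N$.

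Next, for $a=g$, I would describe the track in $N\times I$ explicitly. The sphere $g\cdot U_*$ consists of $U_*$, a thin tube $T$ running along an arc $\gamma$ representing $g$, and the meridian sphere $m_F$ which bounds a normal $3$-ball $B$ meeting $U_*$ transversely in a single point $z$. The homotopy $H_g$ keeps $U_*$ stationary, contracts the tube back inside a collar of $B$, and shrinks $m_F$ radially through $B$ to a point. The track of the stationary $U_*$ factor is $U_*\times I \subset N\times I$, while the track of the shrinking meridian sweeps out (a perturbation of) $B\times\{t_0\}$ for a shrinking parameter, inside $N\times I$. These two sheets meet in exactly one transverse double point $p$, corresponding to the single moment when the shrinking $m_F$ sweeps across $z=B\cap U_*$.

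It remains to identify the group element and sign of $p$. I would choose the whisker at the basepoint of the $U_*$-sheet to be the constant whisker of $U_*$. Then the sheet-changing loop at $p$ runs trivially along $U_*\times I$ from the basepoint to $p$ on the first sheet, and on the second sheet returns from $p$ across the (contracted) meridian, along the tube $T$ (hence along $\gamma$), and back to the basepoint. This loop represents $g\in\pi_1N$. For the sign, the orientation of $m_F$ was chosen so that $B$ meets $U_*$ with local intersection number $+1$; in $N\times I$ the extra $I$-direction is used only by the shrinking sheet, so the local intersection sign of the two sheets at $p$ equals the sign of $B\cdot U_*$, namely $+1$. Hence $\mu_3(H_g)=g\in\A$. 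Reversing the orientation of the meridian in $(-g)\cdot U_*$ reverses the sign, giving $\mu_3(H_{-g})=-g$, which combined with additivity completes the proof.

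The main obstacle I expect is the bookkeeping of whiskers, orientations, and tube arcs needed to confirm that the single double point $p$ is labeled exactly by $g$ (rather than $g^{-1}$ or a conjugate), and that the sign is $+1$ rather than $-1$; once the conventions are pinned down the geometry is essentially forced. Note that the relations $g+g^{-1}=0$ and $1=0$ in $\A$ are not an obstacle here, since they are already built into the well-definedness of the action of $\A$ on $\pi_0\Emb_*(S^2,N)$ (as in Figure~\ref{geo-action-fig}), and the additivity argument above automatically respects them.
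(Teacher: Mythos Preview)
Your proposal is correct and follows exactly the approach the paper has in mind: the paper states the lemma with a $\square$ immediately after the sentence ``Since each meridian sphere $m_F$ bounds a normal 3-ball that intersects $U_*$ exactly once, we get the following result,'' so its entire argument is the single-double-point observation you have carefully unpacked. Your additivity reduction and explicit identification of the sign and group element at that double point are precisely the details being left to the reader.
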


\subsection{The level-preserving Whitney move}\label{subsec:level-preserving-W-move}

\begin{prop}\label{prop:6d-Whitney}
    The track $H\colon S^2 \times I \to N^5\times I$ of a homotopy between two embeddings is homotopic (rel boundary) to the track of an isotopy if and only if its self-intersection invariant $\mu_3(H)\in\A$ vanishes.
\end{prop}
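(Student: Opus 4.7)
The plan is to prove both directions by analyzing the transverse self-intersections of a generic representative of $H$. For the direction $(\Rightarrow)$, the track of an isotopy is an embedded 3-manifold, hence has vanishing $\mu_3$; and since $\mu_3$ is invariant under rel-boundary homotopies of properly immersed simply-connected 3-manifolds in 6-manifolds, as recalled in Section~\ref{subsec:3-in-6}, any rel-boundary homotopy from $H$ to such a track forces $\mu_3(H)=0$.

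For the direction $(\Leftarrow)$, after a small perturbation $H$ has finitely many transverse self-intersections $p_1,\dots,p_k$ with signs $\epsilon_i$ and group elements $g_i$, and by hypothesis $\sum_i \epsilon_i g_i = 0 \in \A$. The first step is to organize these into \emph{Whitney pairs} via two auxiliary moves. Switching the local sheet ordering at $p_i$ is a combinatorial relabeling, replacing $(\epsilon_i, g_i)$ by $(-\epsilon_i, g_i^{-1})$ and realizing the relation $g+g^{-1}=0$. Inserting a cusp homotopy in a small 6-ball, supported in a short time interval, produces an additional pair with trivial group element and opposite signs, realizing $1=0$. Both operations preserve the level-preserving structure of $H$, so after these modifications the self-intersections pair up as $(p_i^+, p_i^-)$ with $g_i^+=g_i^-$ and opposite signs.

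For each such pair we construct a Whitney disk $W_i \subset N \times I$ bounded by two arcs in $H(S^2 \times I)$ joining $p_i^+$ to $p_i^-$ through matching sheets. The matching of group elements makes the Whitney circle null-homotopic in $N \times I$, so it bounds an immersed disk. Dimension counts in the 6-manifold $N \times I$ (using $2+2=4<6$ and $2+3=5<6$) allow the $W_i$ to be made embedded, mutually disjoint, with interiors disjoint from $H(S^2 \times I)$. The normal framing obstruction on $W_i$, which lies in $\pi_1(\mathrm{SO}(4)) \cong \Z/2$, can be corrected by a local boundary twist, using the codimension-$>2$ hypothesis.

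The heart of the proposition, and the main obstacle, is to perform the Whitney move along each $W_i$ \emph{level-preservingly}. The plan is to first isotope $W_i$ through embedded Whitney disks of the same type so that its projection to the $I$-factor is monotone between the times of $p_i^+$ and $p_i^-$; the 4-dimensional normal freedom to $W_i$ in $N \times I$ suffices to push critical points of the projection off the disk. In this product position the Whitney move is realized as a 1-parameter family of local moves on the time slices $h_t$, removing the transient intersection at each moment while leaving $h_t$ fixed outside a small neighborhood. Iterating over all Whitney pairs converts $H$ into a level-preserving embedding, i.e.\ the track of an isotopy. The delicate point is precisely this level control: generic Whitney disks are not level-preserving, and the standard high-dimensional Whitney trick alone yields only an embedding with no a priori time structure, so one must exploit the codimension to reposition the Whitney disks before performing the moves.
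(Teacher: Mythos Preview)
The forward direction is correct and matches the paper. For the converse you take a genuinely different route from the paper, and the key step has a real gap.

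The paper does \emph{not} try to make the Whitney disk monotone in the $I$--direction. Instead, its key move is Lemma~\ref{lem:pairs-with-same-time-coordinate}: by an explicit rel-boundary isotopy of $H$ (sliding one double point along the time axis via bump functions in the domain $S^2\times I$), each Whitney pair $\{p_i,q_i\}$ is first arranged to lie at a \emph{common} time $c_i$. Once both points sit in $H_{c_i}$, general position in the $5$--manifold $N_{c_i}$ lets one place the Whitney disk $W_i$ entirely inside that single slice. The Whitney move, in Milnor's coordinates $\overline{W}\times\R^2_A\times\R^2_B$, then pushes the $A$--sheet only along the $\overline{W}$--factor, which lies in $N_{c_i}$ and is orthogonal to $\partial_t$; so the move is level-preserving for free.

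Your approach leaves $p_i^+$ at time $t_1$ and $p_i^-$ at time $t_2\neq t_1$ and asks for a Whitney disk whose projection to $I$ is monotone. Even granting this, the sentence ``the Whitney move is realized as a 1-parameter family of local moves on the time slices $h_t$, removing the transient intersection at each moment'' does not describe an actual construction: for $t\in(t_1,t_2)$ the map $h_t$ is already an embedding near the relevant sheets, so there is no intersection to remove, while at the endpoints $t_1,t_2$ the level set of $W$ degenerates to a point and provides no arc to guide anything. The Whitney move on the $3$--dimensional sheet $A$ pushes $A$ \emph{across} the $2$--disk $W$; when $W$ spans an interval of times, that push a priori moves points of $A$ in the $\partial_t$--direction, which is precisely what must be avoided. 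Turning your picture into a proof would require a careful analysis near the corners $p_i^\pm$ and a choice of coordinates in which the push direction is everywhere tangent to the time slices --- essentially the paper's argument, but in a harder setting. Equalizing the times of a Whitney pair first is exactly the trick that makes the level-preserving move straightforward.

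One small correction: a cusp homotopy of $H$ creates or destroys a \emph{single} transverse double point (with trivial group element and chosen sign), not a cancelling pair; the paper uses at most one cusp to fix parity before pairing.
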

It will follow from the proof that if the original homotopy is a based homotopy, then the resulting isotopy can be taken to be based.
In fact, the construction given in the proof can be taken to be supported away from any $I$-family of whiskers. 

We remark that since the classical Whitney move works for immersed $3$-manifolds in $6$-manifolds \cite[Thm.6.6]{Milnor-lectures}, the vanishing of $\mu_3(H)\in\A$ immediately implies that the track $S^2 \times I \imra N \times  I$ is homotopic (rel boundary) to a concordance, so Proposition~\ref{prop:6d-Whitney} would then follow from Hudson's Theorem that concordance implies isotopy in codimensions $\geq 3$ \cite{Hudson}. Rather than invoking Hudson's result, 
our proof of Proposition~\ref{prop:6d-Whitney} will show that one can arrange for the Whitney moves to preserve $I$-levels in order to directly achieve an isotopy rather than just a concordance. 
\begin{proof}[Proof of Proposition~\ref{prop:6d-Whitney}]
    The ``only if'' direction is clear since $\mu_3$ is invariant under homotopy and vanishes on embeddings.
    
    To prove the ``if'' direction, we first introduce some streamlined notation that will only be used in the proof of Proposition~\ref{prop:6d-Whitney}, including the ancillary Lemma~\ref{lem:pairs-with-same-time-coordinate}.
    
    \textbf{Notation.}
    For any subset $\sigma\subset I$, denote by $H_\sigma\coloneqq H|_{S^2\times\sigma}$ the restriction to $S^2\times\sigma$ of the track $H\colon S^2 \times I \to N\times I$. 
    By the standard abuse of the notation, $H_\sigma\coloneqq H(S^2\times\sigma)$ is also the image of this map, and is contained in the subset $N_\sigma\coloneqq N\times \sigma\subset N\times I$.

\begin{lem}\label{lem:pairs-with-same-time-coordinate}
    For $H$ as in Proposition~\ref{prop:6d-Whitney} with $\mu_3(H)=0\in\A$, it may be arranged by a homotopy rel $\partial$ that there exist finitely many distinct points $c_i\in I$ such that the transverse self-intersections of $H$ occur in pairs $\{p_i,q_i\}\subset H_{c_i}$ with 
    $g_{p_i}=g_{q_i}$ and $\epsilon_{p_i}=-\epsilon_{q_i}$ for each $i$.
\end{lem}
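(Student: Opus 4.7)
The plan has three steps. First I put $H$ in generic position so that its track has finitely many transverse self-intersections at mutually distinct time levels. Second, I use the hypothesis $\mu_3(H)=0\in\A$, together with the freedom to switch sheet-orderings and to add a controlled number of cusp homotopies, to match the double points into algebraically canceling pairs. Third, I perform local homotopies translating one member of each pair along the $I$-direction so that the two land at a common, freshly chosen level $c_i$.

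For the first step, a small perturbation rel $\partial$ makes $H\colon S^2\times I\imra N\times I$ a generic immersion with finitely many transverse self-intersections $p_1,\ldots,p_N$, occurring at mutually distinct levels $t_1<\cdots<t_N\in(0,1)$. Fixing the given whisker and an arbitrary sheet-ordering at each double point yields labels $(\epsilon_j,g_j)$; recall that switching a sheet-ordering at $p_j$ replaces $(\epsilon_j,g_j)$ with $(-\epsilon_j,g_j^{-1})$. Setting $c(g)\coloneqq\sum_{j:\,g_j=g}\epsilon_j\in\Z$, the hypothesis $\mu_3(H)=0\in\A$ is the identity
\[
    \sum_j\epsilon_j\, g_j\ =\ m\cdot 1+\sum_k a_k(h_k+h_k^{-1})\ \in\ \Z\pi_1N
\]
for some integers $m$ and $a_k$ and non-trivial $h_k\in\pi_1N$, equivalently $c(g)=c(g^{-1})$ for every non-involution $g\neq 1$ and $c(g)\in 2\Z$ for every non-trivial involution. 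I further arrange $c(1)=0$ by performing $|m|$ local cusp homotopies of $H$, each producing one additional double point with trivial group element and sign $-\mathrm{sgn}(m)$; since $1=0$ in $\A$ these leave $\mu_3(H)$ untouched.

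With these numerical constraints I can pair up the double points. For each non-involution $g\neq 1$, relabeling every $(\pm,g^{-1})$-point as $(\mp,g)$ via sheet-switch equalizes the resulting $(+,g)$- and $(-,g)$-counts by the identity $c(g)=c(g^{-1})$, giving a bijective matching of each $(+,g)$ with a $(-,g)$; non-trivial involutions are handled identically using that sheet-switching toggles the sign at an involution-labeled point, and the now-balanced $(\pm,1)$-points pair directly. Finally, for each matched pair $\{p_i,q_i\}$, a local homotopy of $H$ supported in a small neighborhood of the pre-image of $p_i$ in $S^2\times I$, disjoint from the pre-images of all other double points, reparametrizes the time coordinate so that the $p_i$ crossing-event translates from $t_{p_i}$ to a fresh common level $c_i$ close to $t_{q_i}$, with the various $c_i$ chosen mutually distinct. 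The main technical point is this last step: the time-shifts must be carried out in mutually disjoint supports so that paired events do not collide and no new self-intersections are created, but this is routine by genericity since $H$ has codimension $3$ in $N\times I$ and only finitely many events are being moved.
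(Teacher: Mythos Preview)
Your proof is correct and follows the same three-step strategy as the paper: pair up the double points using $\mu_3(H)=0$ (the paper notes that a single cusp suffices, since for $g=1$ a sheet-switch already toggles the sign, so only the parity of the number of trivially-labeled points matters), then shift one member of each pair in time by precomposing $H$ with a bump-function reparametrization of the domain $S^2\times I$ supported near the two preimage points. One caution on your final hand-wave: ``codimension $3$'' alone does not preclude new transverse self-intersections, since two $3$-manifolds in a $6$-manifold generically meet in points; the paper's sharper observation is that the time-shifted piece lies near a $2$-dimensional union of projected arcs $\cup_t\gamma_t\subset N\times I$, which by general position is disjoint from the $3$-dimensional remainder of $H$.
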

    Assuming Lemma~\ref{lem:pairs-with-same-time-coordinate} (which will be proved just below), 
Proposition~\ref{prop:6d-Whitney} follows by a standard application of Whitney moves to eliminate each of the self-intersection pairs of Lemma~\ref{lem:pairs-with-same-time-coordinate} in a way that yields the track of an isotopy. We describe details here for completeness, with the key observation being that each Whitney disk can be chosen to be contained in a level.
    
    Dropping the subscript $i$ from the notation, let $p$ and $q$ be a pair of self-intersections of $H_c$ as in Lemma~\ref{lem:pairs-with-same-time-coordinate}. Since $H_c$ is a map of a $2$-sphere $S^2\times c$, the self-intersections $p$ and $q$ are not transverse for $H_c$; but there exists some small $\delta>0$ such that $\{p,q\}=H_{[c-\delta,c+\delta]}\pitchfork H_{[c-\delta,c+\delta]}$, the transverse self-intersections of the immersed $3$-manifold $H_{[c-\delta,c+\delta]}$ in the $6$-manifold $N_{[c-\delta,c+\delta]}$.
    
    Since $p$ and $q$ have the same group elements $g_p=g_q$ and opposite signs $\epsilon_p=-\epsilon_q$, there exists a \emph{Whitney disk} $W\subset N_{[c-\delta,c+\delta]}$ pairing $p$ and $q$. By general position we may assume that $W$ is embedded in the $5$-dimensional slice $N_c\subset N_{[c-\delta,c+\delta]}$ with interior disjoint from $H_c$. 
    The Whitney disk boundary $\partial W=\alpha\cup\beta$ is the union of embedded arcs $\alpha$ and $\beta$ contained in $H_c$, with $\alpha\cap\beta=\{p,q\}$.
    Let $\overline{\alpha}$ and $\overline{\beta}$ be slightly longer arcs in $H_c$ containing $\alpha$ and $\beta$, respectively, such that $\overline{\alpha}$ and $\overline{\beta}$ extend just beyond $p$ and $q$. 

    Let $A,B\subset H_{[c-\delta,c+\delta]}$ denote regular $3$-ball neighborhoods of 
    $\overline{\alpha}$ and $\overline{\beta}$ in $H_{[c-\delta,c+\delta]}$. 
    Each of $A$ and $B$ is ``the image of a local sheet of a $2$-sphere $H_t$ moving in time'', with $A_t$ and $B_t$ each embedded in $H_t$ for $t\in [c-\delta,c+\delta]$, and $A\cap B=\{p,q\}$. 
    It follows that $A:D^2\times I\hra N\times I$ is the track of an isotopy $A_t$.

    The Whitney move that eliminates $p$ and $q$ will be described using a particular choice of coordinates for 
    an open neighborhood $V\subset N_{[c-\delta,c+\delta]}$ containing $W$.
    
    By \cite[Lem.6.7]{Milnor-lectures}, $V$ may be chosen to be diffeomorphic to 
    $\overline{W}\times\R^2_A\times\R^2_B$, where 
    \begin{itemize}
        \item 
        $\overline{W}\subset N_c$ is a smooth $2$-disk 
        formed from $W$ by attaching a half-open collar to $\partial W$,
        
        \item
        $V\cap A=\overline{\alpha}\times\R^2_A\times (0,0)$, 
        
        \item
        $V\cap B=\overline{\beta}\times(0,0)\times\R^2_B.$ 
    
    \end{itemize}
    Let $\overline{\alpha}(s)$ be a smooth isotopy of the arc $\overline{\alpha}$ in $\overline{W}$, for $0\leq s\leq 1$, such that $\overline{\alpha}(0)=\overline{\alpha}$, and $\overline{\alpha}(1)$ passes just above $\beta\subset \overline{\beta}$ as in Figure~\ref{alpha-isotopy-fig}.
    In particular, $\overline{\alpha}(s)$ is supported near $W\subset \overline{W}$ for all $0\leq s\leq 1$, and $\overline{\alpha}(1)$ is disjoint from $B$.
    \begin{figure}[!htbp]
            \centerline{\includegraphics[width=0.7\linewidth]{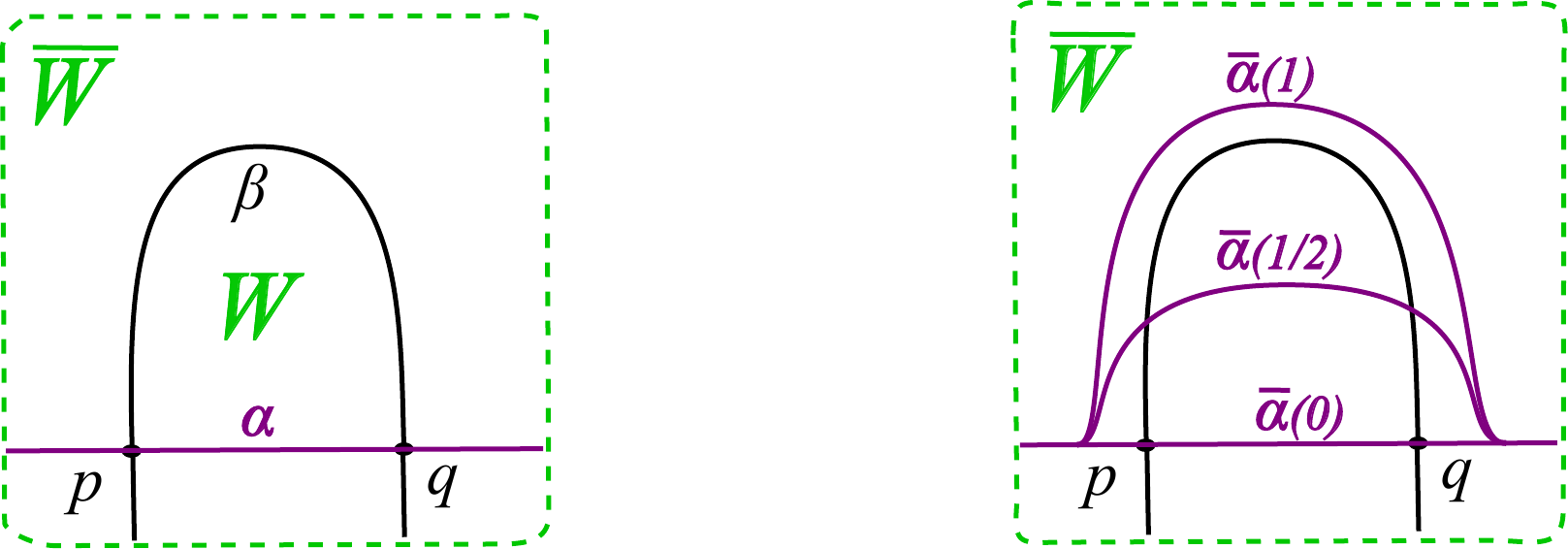}}
            \caption{}\label{alpha-isotopy-fig}
    \end{figure}
    
    Let $\rho\colon \R^2\to[0,1]$ be a smooth bump function $(u,v)\mapsto \rho(u,v)$ such that:
    \begin{itemize}
        
        \item 
        $\rho(u,v)=1$ if $\sqrt{u^2+v^2}\leq 1$,
        
        \item
        $\rho(u,v)=0$ if $\sqrt{u^2+v^2}\geq 2$. 
        
    \end{itemize}
    
    Now we use $\overline{\alpha}(s)$ and $\rho$ to define the Whitney move as the result of an isotopy $A(s)$, $0\leq s \leq 1$ of $A$ which fixes $B$:
    \[
    A(s)= \overline{\alpha}(s\rho(a_1,a_2))\times(a_1,a_2)\times(0,0)\subset \overline{W}\times\R^2_A\times\R^2_B,
    \]
    where $(a_1,a_2)$ runs through $\R^2_A$.
    Then $A(0)=A$, and the result of the $W$-Whitney move on $A$ is $A':=A(1)$, so that $A'\cap B=\emptyset$.

    Note that $A(s)=A$ near $\partial V$, and hence we can extend $A(s)$ to be the identity outside $V$.
    This defines a homotopy $H(s)$ of $H=H(0)$ such that $H'\coloneqq H(1)$ satisfies $H'\pitchfork H'=H\pitchfork H - \{p,q\}$. 
    
    By construction $A(s)$ only moves points of $A$ along the $\overline{W}$-factor, which is orthogonal to the $I$-factor of $N\times I$ since $\overline{W}\subset N_c$. 
    This means that each $A(s)$ consists of the track of isotopies $A_t(s)$, and similarly for $H(s)$.

    Performing Whitney moves on all the self-intersection pairs of Lemma~\ref{lem:pairs-with-same-time-coordinate} yields the track of an isotopy as desired.
\end{proof}

\begin{proof}[Proof of Lemma~\ref{lem:pairs-with-same-time-coordinate}]
    The condition $\mu_3H=0\in\A$ means that the (finite) set of transverse self-intersections of the generic track $H\colon S^2\times I\imra N\times I$
    can be decomposed into finitely many pairs $\{p_i,q_i\}$ with 
    $g_{p_i}=g_{q_i}$ and $\epsilon_{p_i}=-\epsilon_{q_i}$ (for appropriately chosen sheets, and after perhaps performing a single cusp homotopy on $H$). 
    
    Suppose that for some $i$, we have $p_i\in H_{t_1}\cap H_{t_1}\subset H\pitchfork H$ and $q_i\in H_{t_2}\cap H_{t_2}\subset H\pitchfork H$, with $t_1<t_2$.
    We will describe how to change $H$ by an isotopy rel boundary which ``moves'' $q_i$ to $q_i'\in H_{t_1}\cap H_{t_1}$ and is supported away from all other self-intersections of $H$. The construction will show more generally that self-intersections of $H$ can be arranged to occur at any chosen times, while preserving signs and group elements.  
    
    \emph{Special case.}
    First consider the special case that $H$ has just a single pair $\{p,q\}=H\pitchfork H$ of self-intersections with $p\in H_{t_1}\cap H_{t_1}$ and $q\in H_{t_2}\cap H_{t_2}$, and with $0<t_1<t_2<1$ in $I=[0,1]$.
    Let $(x,t_2)$ and $(y,t_2)$ be the two distinct preimages in $S^2\times t_2\subset S^2\times I$ of $q= H_{t_2}((x,t_2))=H_{t_2}((y,t_2))$.
    Define vertical arcs $a \coloneqq x\times [t_1,t_2]$ and $b\coloneqq y\times [t_1,t_2]$ in the domain $S^2\times I$ (see the left side of Figure~\ref{nested-bump-functions-pair-fig}).   

\begin{figure}[!htbp]
        \centerline{\includegraphics[scale=.35]{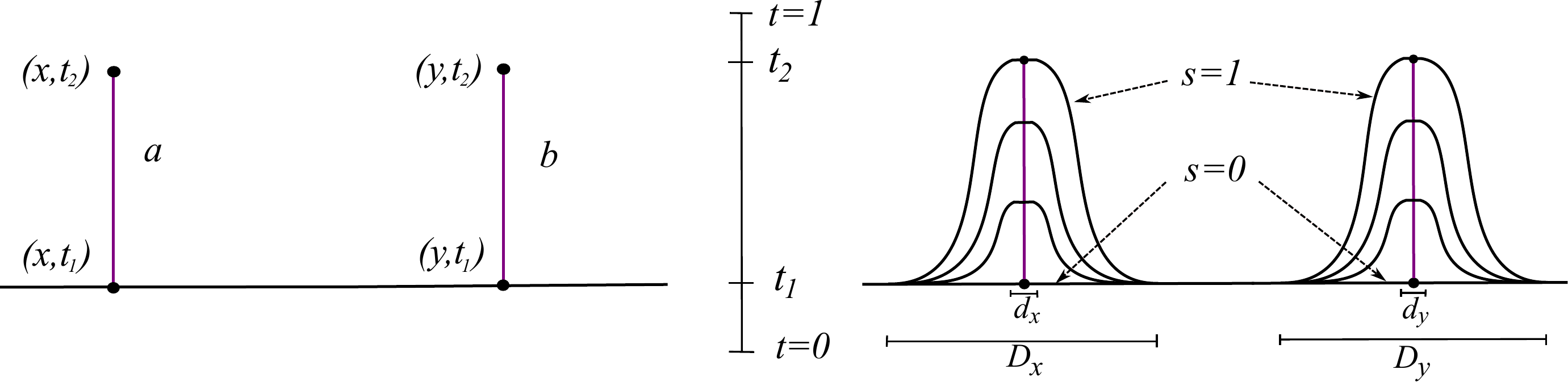}}
        \caption{Schematic pictures in the domain $S^2\times I$, with the $I$-factor running vertically from bottom to top. Left: The vertical arcs $a$ and $b$, and a horizontal sub-arc of $S^2\times t_1$. Right: Images of the sub-arc of $S^2\times t_1$ containing the nested disks $d_x\subset D_x$ and $d_y\subset D_y$ under the isotopy $\psi_s$ for $s=0$, $s=1$, and for two other intermediate values of $s$.}\label{nested-bump-functions-pair-fig}
\end{figure}
    For $0\leq s\leq 1$ we will define a smooth isotopy $\psi_s\colon S^2\times I\to S^2\times I$ supported near $a\cup b$ such that $H\circ\psi_s$ satisfies
    $H=H\circ\psi_0$ and 
    $(H\circ\psi_1)\pitchfork (H\circ\psi_1)=\{p,q'\}\subset (H\circ\psi_1)_{t_1}$. 
    
    First we define $\psi_s$ on $S^2\times t_1$ as the sum of two local bump functions of height $s(t_2-t_1)$ in the positive $I$-direction centered at $(x,t_1)$ and $(y,t_1)$ (see the right side of Figure~\ref{nested-bump-functions-pair-fig}). 
    More specifically, let $x\in d_x\subset D_x\subset S^2$ and $y\in d_y\subset D_y\subset S^2$ be small concentric pairs of nested disks around $x$ and $y$, respectively. Then
    \[
    \psi_s(z,t_1) = 
    \begin{cases}
     (z,t_1)  &  \text{ if } z\notin D_x\cup D_y \\
     (z,t_1+s(t_2-t_1)) & \text{ if } z\in d_x\cup d_y\\
     (z,t_1+\sig(z)s(t_2-t_1)  & \text{ if } z\in (D_x\setminus \int(d_x))\cup (D_y\setminus \int(d_y))
    \end{cases}
    \]
    Here the sigmoid function $\sig(z)$ smoothly interpolates between $\sig(z)=0$ for $z\in\partial D_x\cup \partial D_y$ and $\sig(z)=1$ for $z\in \partial d_x\cup \partial d_y$.
    
    Now extend $\psi_s$ to all of $S^2\times I$ by tapering the bump functions down to zero as $t$ moves away from $t_1$, so that $\psi_s(z,0)=(z,0)$ and $\psi_s(z,1)=(z,1)$ for all $s$. See Figure~\ref{nested-bump-functions-extended-fig} for an illustration of the extended $\psi_1\colon S^2\times I\to S^2\times I$.
    \begin{figure}[!htbp]
            \centerline{\includegraphics[scale=.35]{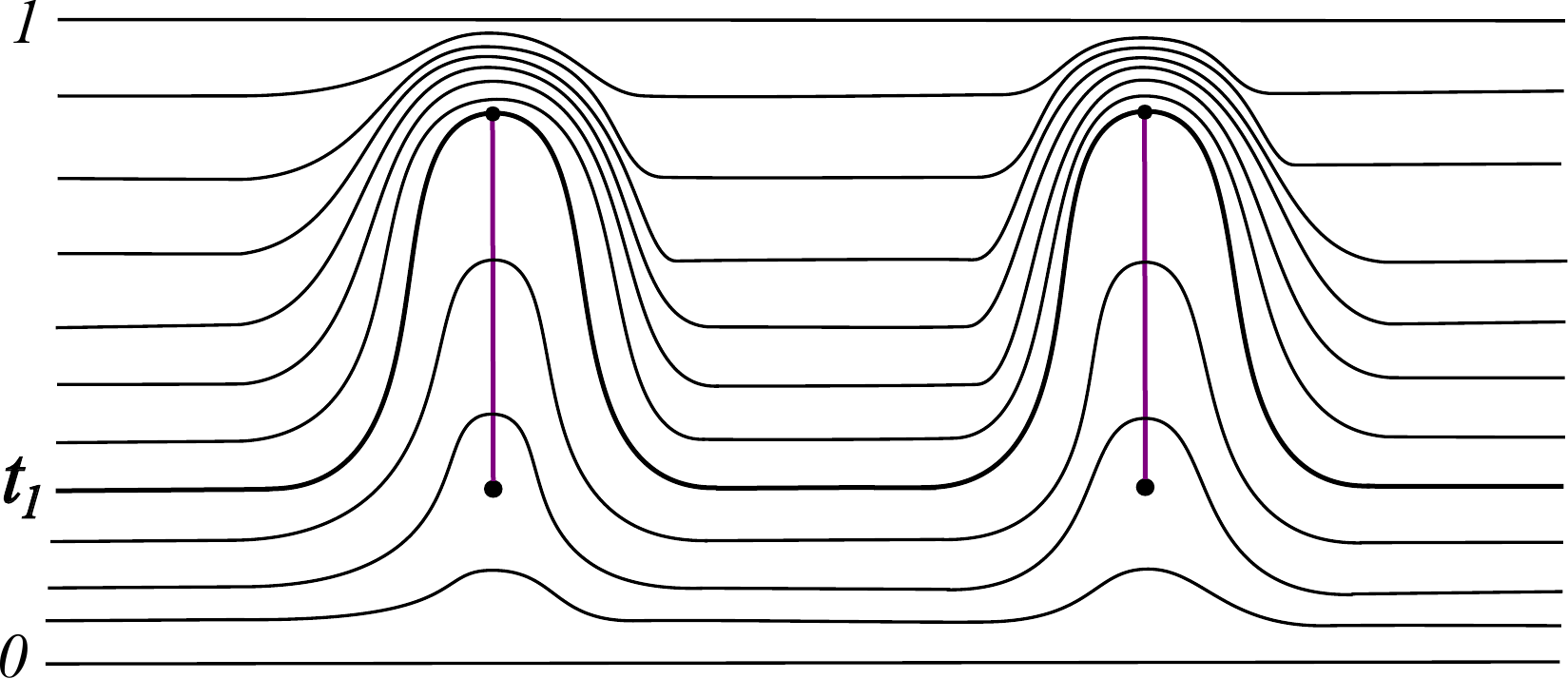}}
            \caption{A schematic picture of the images under $\psi_1$ of some horizontal slices of $S^2\times I$.}\label{nested-bump-functions-extended-fig}
    \end{figure}

    Next we check that $H'\coloneqq H\circ\psi_1$ has the desired properties. First, observe that since $\psi_s$ restricts to the identity map on the complement of $(D_x\cup D_y)\times I$, we have $H'=H$ when restricted to 
    $(S^2\setminus (D_x\cup D_y))\times I$. In particular, $H'((S^2\setminus (D_x\cup D_y))\times I)$ has only the single transverse self-intersection point $p\in H\pitchfork H$ which occurs at $t_1$. 
    
    Now consider the restriction of $H'$ to $(D_x\cup D_y)\times I$. 
    By construction, $H'((D_x\cup D_y)\times I)$ has just the single transverse self-intersection $q'=H'(d_x,t_1)\cap H'(d_y,t_1)$, which is the image of the projection to $N_{t_1}$ of $q\in H\pitchfork H$.
    
    It remains to check that there are no transverse intersections between  $H'((D_x\cup D_y)\times I)$ and $H'((S^2\setminus (D_x\cup D_y))\times I)$.
    For each $t\in I$, let $\proj_t\colon N\times I\to N_t$ be the projection map.
    By general position, for each $t\in I$ the image $\proj_t\circ H(a\cup b)\subset N_t$ is an embedded arc $\gamma_t$. 
    Since the image $H'((D_x\cup D_y)\times I)$ in the $6$-manifold $N\times I$ is contained near the $2$-dimensional union $\cup_{t\in I}\,\gamma_t$, it follows by general position that $H'((D_x\cup D_y)\times I)$ has no transverse intersections with
    the $3$-dimensional $H'((S^2\setminus (D_x\cup D_y))\times I)$.
    
    \emph{General case.}
    Since by general position any number of self-intersections can be assumed to have preimages projecting to distinct points in $S^2$,
    the above construction moving $q\in H_{t_2}\cap H_{t_2}$ to $q'\in H'_{t_1}\cap H'_{t_1}$ for $t_1<t_2$ can be carried out iteratively (or even simultaneously) for any chosen subset of self-intersections while fixing the complementary subset.
\end{proof}

\subsection{Based self-homotopies}\label{subsec:based-self-htpies}
Recall from section~\ref{subsec:intro-based-isotopy} of the introduction that, for a fixed based embedding $U_*\colon S^2 \hra N$, we denote by $\A_{[U_*]}$ the quotient of $\A$ by the image of the indeterminacy homomorphism 
$\phi_{[U_*]}\colon \pi_3N \longrightarrow \A$ defined by $A\mapsto\mu_3(A) + [\lambda_N(A,[U_*])]$.

The following lemma will be used to show that our invariants are well defined:
\begin{lem}\label{lem:mu-of-based-self-homotopies}
    If $J_*\colon S^2 \times I \imra N \times I$ is a generic track of a based self-homotopy of $U_*$,
    then
    \[
        \mu_3(J_*)=0 \in \A_{[U_*]}.
    \]
\end{lem}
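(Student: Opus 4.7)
The strategy is to show $\mu_3(J_*) = \phi_{[U_*]}(A)$ in $\A$, where $A \in \pi_3 N$ is the class represented by $J_*$ via the standard adjoint isomorphism $\pi_1(\Map_*(S^2, N), U_*) \cong \pi_3 N$; since $\phi_{[U_*]}(\pi_3 N)$ is exactly the indeterminacy collapsed in the definition of $\A_{[U_*]}$, this gives $[\mu_3(J_*)] = 0$. I would proceed by closing up $J_*$ into a map $F\colon S^2 \times S^1 \to N \times I$ using the embedded cylinder $C\colon S^2 \times I \hra N \times I$, $(x,t) \mapsto (U_*(x), t)$, gluing $J_*$ to the orientation-reversed $C$ along the common boundary $U_* \times \partial I$. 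Because $J_*$ is based (so $J_*(\{e\}\times I) \subset \{z\}\times I$), $F$ carries $\{e\}\times S^1$ into the contractible arc $\{z\}\times I$, and after a small homotopy $F$ factors through the collapse $S^2\times S^1 / (\{e\}\times S^1) \simeq S^2 \vee S^3$. Identifying the adjoint of $F$ with the concatenation of $J_*$ and the constant self-homotopy of $U_*$ shows that the $S^2$-summand of this wedge is represented by $[U_*]\in\pi_2 N$ and the $S^3$-summand by $A \in\pi_3 N$.

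Next I would compute $\mu_3(F) \in \A$ in two ways. Via the factorization through $S^2\vee S^3$, a generic representative has no self-intersections on the $S^2$-piece (codimension $4$ in the $6$-manifold $N\times I$) and no intersections between the $S^2$ and $S^3$ pieces ($2+3-6 = -1$), giving $\mu_3(F) = \mu_3(A)$. Via the direct decomposition $F = J_* \cup (-C)$, using that $C$ is embedded, $\mu_3(F) = \mu_3(J_*) - [\lambda_3(J_*, C)]$, with the minus sign recording the reversed orientation of the $C$-half in the closure. Equating the two expressions yields $\mu_3(J_*) = \mu_3(A) + [\lambda_3(J_*, C)]$.

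The remaining step is to identify $\lambda_3(J_*, C) = \lambda_N(A, [U_*])$ in $\Z\pi_1 N$. After a small boundary-fixing pushoff making $C$ transverse to $J_*$, their transverse intersections in $N\times I$ correspond to triples $(x,y,t) \in S^2\times S^2 \times I$ with $J_*(x,t)$ projecting to $U_*(y)$, and these biject under the collapse $S^2\times I \twoheadrightarrow S^3$ defining $A$ with the transverse intersections of $A\colon S^3 \to N$ with $U_*$ in $N$, with matching orientation signs and $\pi_1 N$-elements. Combining, $\mu_3(J_*) = \mu_3(A) + [\lambda_N(A,[U_*])] = \phi_{[U_*]}(A)$, so $[\mu_3(J_*)] = 0 \in \A_{[U_*]}$. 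The two hurdles I anticipate are (a) the boundary degeneracy where $J_*$ and $C$ both restrict to $U_* \times \partial I$, requiring a carefully chosen boundary-fixing pushoff to achieve interior transversality, and (b) consistent orientation bookkeeping between the $6$-dimensional pairing $\lambda_3$ in $N\times I$ and the $5$-dimensional pairing $\lambda_N$ in $N$, since a sign error would land on $\mu_3(A) - [\lambda_N(A,[U_*])]$ instead of $\phi_{[U_*]}(A)$.
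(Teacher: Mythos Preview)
Your proposal is correct and follows essentially the same route as the paper's proof: the paper also glues $J_*$ with the reversed constant cylinder $U_*\times I$ and compares self-intersection counts, but it works directly with the CW structure of $S^2\times I$, gluing the two 3-cells $B_J$ and $-B_U$ (where $J_*$ and $U_*\times I$ already agree on the 2-skeleton $S^2\times\{0,1\}\cup\{e\}\times I$) to obtain $A\colon S^3\to N\times I$, rather than passing through your intermediate $F\colon S^2\times S^1\to N\times I$ and the collapse to $S^2\vee S^3$. The $S^3$-summand in your factorization is exactly the paper's $A$, and your two computations of $\mu_3(F)$ correspond to the paper's comparison of the double points of $B_J$ alone versus those of $B_J\cup(-B_U)$; the paper then identifies $\lambda_3(J_*,U_*\times I)=\lambda_3(A,U_*\times I)=\lambda_N(A,[U_*])$ in one line, where you argue the same equality by a direct bijection of intersection points.
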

\begin{proof}
    Since $J_*$ is a based self-homotopy, it agrees with $U_* \times I$ on the 2-skeleton $S^2 \times \{0,1\} \cup e \times I$ of $S^2 \times I$, with $e\in S^2$ the basepoint. 
    So they only differ on the 3-cell, where $U_* \times I$ is represented by $B_U\coloneqq  U_*(D^2)\times I$ (here $D^2$ is the complement in $S^2$ of a small disk around $e$) and
    $J_*$ is represented by a generic $3$--ball $B_J\colon D^3\imra (N \times I) \smallsetminus \nu( U_*(e) \times I)$.
    By construction, the boundaries of these two 3-balls are parallel copies of an embedded 2--sphere in the boundary of a small neighborhood of $U_* \times \{0,1\}\cup( U_*(e) \times I)$.
    
    Gluing $B_U$ and $B_J$ together along a small embedded cylinder $S^2 \times I$ between their boundaries yields a map of a $3$--sphere
    $A\coloneqq B_J\cup (-B_U)\colon S^3 \to N \times I$.
    To prove the lemma we will show that $\mu_3(J_*)=\phi_{[U_*]}(A)$.
    
    First note that on one hand, all contributions to $\mu_3(J_*)$ come from
    the self-intersections of the immersed $3$--ball $B_J$.
    On the other hand, contributions to $\mu_3(A)$ come from the self-intersections of $B_J$ and the intersections between $B_J$ and the embedded $3$--ball $-B_U$. The latter intersections are precisely counted by $-\lambda_3(J_*, U_* \times I)$, cf.\ \eqref{eq:mu-lambda}.
    Therefore,
    \[
        \mu_3(J_*) - \mu_3(A) = \lambda_3(J_*, U_* \times I),
    \]
    and since $\lambda_N(A,  U_*) = \lambda_3(A,  U_* \times I) = \lambda_3(J_*, U_* \times I)$,
    we obtain
    \[
        \mu_3(J_*) = \mu_3(A) + \lambda_N(A,  U_*)=\phi_{[U_*]}(A).\qedhere
    \]
\end{proof}

\subsection{From homotopy to isotopy by adding 3-spheres}\label{subsec:htpy-to-istpy}
The following lemma will be used to show that our invariants are injective. 
\begin{lem}\label{lem:homotopies-to-isotopies}
Suppose $H\colon S^2 \times I \imra N \times I$ is a generic track of a homotopy between embeddings $R\colon S^2\hra N$
and $R'\colon S^2\hra N$ such that the homotopy restricts to an embedding $U\colon S^2\hra N$ for some point in $I$.
Then $R$ is isotopic to $R'$ if $\mu_3(H)=0\in\A_{[U_*]}$.
Moreover, if $H$ is a based homotopy, then the resulting isotopy may be taken to be based.
\end{lem}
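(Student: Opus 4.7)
The plan is to modify $H$ by splicing in a self-homotopy of $U$ at the level where $H$ passes through $U$, producing a new track $H'$ with the same endpoints and with $\mu_3(H')=0\in\A$, and then to invoke Proposition~\ref{prop:6d-Whitney} to conclude that $R$ and $R'$ are isotopic.

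The hypothesis $\mu_3(H)=0\in\A_{[U_*]}$ provides a class $A\in\pi_3N$ with $\mu_3(H)=\phi_{[U_*]}(A)\in\A$. Let $t_0\in I$ be the level at which $H_{t_0}=U$ is the embedding supplied by the hypothesis, and split $H=H_1\cup H_2$ at $t_0$, with $H_1$ running from $R$ to $U$ and $H_2$ from $U$ to $R'$. I will construct a generic self-homotopy $J$ of $U$ whose track satisfies $\mu_3(J)=-\phi_{[U_*]}(A)\in\A$, and then set $H'\coloneqq H_1\cup J\cup H_2$. Because $H_{t_0}=U$ is embedded, the splicing is transverse and the self-intersections of $H'$ are exactly those of $H_1$, $J$, and $H_2$ collected together, so additivity of $\mu_3$ gives $\mu_3(H')=\mu_3(H)+\mu_3(J)=0\in\A$. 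Proposition~\ref{prop:6d-Whitney} applied to $H'$ then yields an isotopy from $R$ to $R'$.

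To build $J$, I reverse the construction appearing in the proof of Lemma~\ref{lem:mu-of-based-self-homotopies}. Represent $-A\in\pi_3N$ by a generic map $a\colon S^3\to N\times\{t_0\}$ whose image is disjoint from $U(S^2)\times\{t_0\}$ away from a small collar of a basepoint, and form $J$ as the ambient connected sum of the constant track $U_*\times I$ with $a$ along a short tube based near the whisker. The self-intersections of $J$ then consist of those of $a$, contributing $\mu_3(-A)=-\mu_3(A)$, together with the transverse intersections $a\pitchfork(U_*\times I)$, contributing $\lambda_N(-A,[U_*])=-\lambda_N(A,[U_*])$; hence $\mu_3(J)=-\phi_{[U_*]}(A)\in\A$ as needed. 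The linearity of $\mu_3$ invoked here is legitimate on $\pi_3(N\times I)\cong\pi_3N$, since representatives can be pushed into disjoint time-slices so that $\lambda_3$ vanishes between them.

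For the ``moreover'' clause, when $H$ is a based homotopy, taking $a$ and the tube to be based makes $J$ a based self-homotopy and leaves $H'$ based; the parenthetical strengthening of Proposition~\ref{prop:6d-Whitney} then upgrades the resulting isotopy to a based one. The main point requiring care is the clean additivity of $\mu_3$ across the splicing at $t_0$: one must transport the whisker, taken at the start of $H$, across $H_1$ to the level $t_0$ where $J$ is inserted, and verify that the three contributions $\mu_3(H_1)$, $\mu_3(J)$, $\mu_3(H_2)$ genuinely add in $\A$ with matching group elements. This is routine since $H_{t_0}=U$ is an embedding, so no self-intersections lie at the splicing level and the basepoint sheet provides an unambiguous transport of the whisker.
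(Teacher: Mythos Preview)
Your argument is correct and follows essentially the same route as the paper: find $A$ with $\mu_3(H)=\phi_{[U_*]}(A)$, splice a self-homotopy $J$ of $U$ realizing $-\phi_{[U_*]}(A)$ into $H$ at the level $t_0$, and invoke Proposition~\ref{prop:6d-Whitney}. The only cosmetic difference is in how $J$ is built: the paper represents $A$ as a loop $f_t$ of trivial $2$-spheres and takes a \emph{family} of ambient connected sums with $U\times\{t\}$ (so $J^A$ is the track of a homotopy by construction), whereas your connected sum of $U_*\times I$ with a single $a\colon S^3\to N\times\{t_0\}$ is not level-preserving as written and needs a small perturbation before Proposition~\ref{prop:6d-Whitney} applies.
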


\begin{proof}
Since $\mu_3(H)=0\in\A_{[U_*]}$, there exists $A\in\pi_3N$ such that $\mu_3(H)=\phi_{[U_*]}(A)$.
If $U$ is the restriction of $H$ to $t_0\in I$, then by a small ambient isotopy we may assume that $H$ restricts to a product 
$U\times [t_0-\epsilon,t_0+\epsilon]$ on a small interval around $t_0$.
Represent $A$ by a generic regular homotopy $f_t\colon S^2 \times [t_0-\epsilon,t_0+\epsilon]\to N$ from the trivial sphere $f_{t_0-\epsilon}=f_{t_0+\epsilon}$ in $N$ to itself.
Taking a smooth family of ambient connected sums of $f_t$ with $U\times t\subset  U\times [t_0-\epsilon,t_0+\epsilon]$ yields a self-homotopy $J^A$ of $U$. We can assume the guiding paths for these connected sums have interiors disjoint from every $f_t$ and $U\times t$, so that $\mu_3(J^A)=\phi_{[U_*]}(A)$.

Inserting $-J^A$ into $H\times [t_0-\epsilon,t_0+\epsilon]\subset H\times I$ yields a based homotopy $H^0$ between $R$ and $R'$ with $\mu_3(H^0)=0\in\A$, and $H^0$ is homotopic rel boundary to an isotopy from 
 $R$ to $R'$ by Proposition~\ref{prop:6d-Whitney}. If $H$ is a based homotopy, then this resulting isotopy inherits the extended whiskers from $H$.
\end{proof}

\section{Homotopy versus isotopy}
In section~\ref{sec:based-setting} we recall the statement of Theorem~\ref{thm:based-fq} describing the based setting, and give a proof.
In section~\ref{sec:free-setting} we clarify and prove Theorem~\ref{thm:free}, describing the free setting.

Our convention is to write concatenations of homotopies as unions from left to right, with a minus sign indicating that the orientation of the $I$-factor has been reversed.
Recall (section~\ref{sec:homotopies}) that for the purposes of computing the self-intersection invariant $\mu_3(H)$ of a homotopy $H$ the whisker on the track of $H$ will be assumed to be taken at the ``start'' $H(S^2\times 0)\subset N\times 1\subset N\times I$
unless otherwise explicitly specified.

\subsection{The based setting}\label{sec:based-setting}

Theorem~\ref{thm:based-fq} states that for a fixed based embedding $U_*\colon S^2 \hra N^5$ the map
\[
    p_*^{-1}[U_*]\ra \A_{[U_*]},\quad R_* \mapsto \fq_{[U_*]}(R_*)\coloneqq [\mu_3(H_*)],
\]
for $H_*$ any based homotopy from $U_*$ to $R_*$, is a bijection.

Here $p_*^{-1}[U_*]$ is the set of based isotopy classes of embedded spheres $R_*\colon S^2\hra N^5$ that are based homotopic to $U_*$. Moreover, the group $\A_{[U_*]}$ is the quotient of $\A\coloneqq \Z\pi_1N/ \langle g+g^{-1} , 1 \rangle$ by the image of the indeterminacy homomorphism 
$\phi_{[U_*]}\colon \pi_3N \longrightarrow \A$ defined by 
\[
    A\mapsto\mu_3(A) + [\lambda_N(A,[U_*])].
\]

\subsubsection{\texorpdfstring{$\fq_{[U_*]}$}{fq} is well defined}\label{subsec:proof-based-fq-well-defined}

It suffices to show that $\fq_{[U_*]}(R_*)\in\A_{[U_*]}$ is independent of the choice of $H_*$.

Taking the union along $R_*$ of any two based homotopies $H_*, H'_*$ from $U_*$ to $ R_*$ gives a based self-homotopy $J_*=H_*\cup_{R_*} -H_*'$ of $U_*$ such that 
\[\mu_3(J_*)=\mu_3(H_*\cup_{R_*} -H_*') = \mu_3(H_*) - \mu_3(H_*')\]
Since $\mu_3(J_*)$ lies in the image of $\phi_{[U_*]}$ by Lemma~\ref{lem:mu-of-based-self-homotopies},
we have $[\mu_3(H_*)]=[\mu_3(H_*')]\in\A_{[U_*]}$.

\subsubsection{\texorpdfstring{$\fq_{[U_*]}$}{fq} is injective}\label{subsec:proof-based-fq-injective}

If $\fq_{[U_*]}(R_*)=\fq_{[U_*]}(R'_*)$, then there exist based homotopies $H_*$ and $H'_*$ from $U_*$ to $R_*$ and $R'_*$, respectively, such that $\mu_3(H_*)=\mu_3(H'_*)\in\A_{[U_*]}$. 
Taking the union of these homotopies along $U_*$ gives a based homotopy $H''_*\coloneqq H_*\cup_{U_*}-H'_*$ from $R_*$ to $R'_*$ with $\mu_3(H''_*)= \mu_3(H_*)-\mu_3(H'_*)=0\in\A_{[U_*]}$.
It follows from Lemma~\ref{lem:homotopies-to-isotopies} that $R_*$ is based isotopic to $R'_*$.

\subsubsection{\texorpdfstring{$\fq_{[U_*]}$}{fq} is surjective}\label{subsec:proof-based-fq-surjective}
Surjectivity follows directly from Lemma~\ref{lem:mu-action}.

\subsection{The free setting}\label{sec:free-setting}
This section clarifies the target of the invariants in the free setting, and proves Theorem~\ref{thm:free}, which we recall here for the reader's convenience:
For $U_*$ a fixed basing of an embedding $U\colon S^2\hra N^5$, the map 
\[
    p^{-1}[U]\ra \A_{U_*},\quad R \mapsto \fq_{U_*}(R)\coloneqq [\mu_3(H)],
\]
where $H$ is any free homotopy from $U$ to $ R$, is
a bijection.

Here $p^{-1}[U]$ is the set of isotopy classes of embedded spheres $R\colon S^2\hra N^5$ that are freely homotopic to $U$. Moreover, the group $\A_{U_*}$ is the quotient set of the based target $\A_{[U_*]}$ of Theorem~\ref{thm:based-fq} by the affine action of $\Stab[U_*]<\pi_1N$ given by ${}^sa= U_s + sas^{-1}$ for all $a\in\A_{[U_*]}$ and $s\in\Stab[U_*]$, with the definition of $U_s\in\A_{[U_*]}$ given 
just after Lemma~\ref{lem:J_s}
in the next subsection.

\subsubsection{The affine action of \texorpdfstring{$\Stab[U_*]$}{Stab[U]} on \texorpdfstring{$\A_{[U_*]}$}{AF}}\label{subsec:stabilizer-action} 
For each $s\in\Stab[U_*]$ there is a track 
\[
    J_s\colon S^2 \times I \to N\times I
\]
of a free self-homotopy of $U_*$ such that the projection of $J_s(e,-)$ represents $s$, where the basepoint $e\in S^2$ is the preimage of the basepoint of $U_*$.
We say that $J_s$ \emph{represents} $s\in\Stab[U_*]$, and call $s$ the \emph{core} of $J_s$, frequently using the subscript notation to indicate this representation.

Note the following three properties of core elements:
\begin{enumerate}
\item 
Any self-homotopy whose core is the trivial element of $\pi_1N$ is homotopic rel boundary to a based self-homotopy.

\item
Concatenating self-homotopies multiplies the core elements: $J_{sr}=J_s\cup J_r$. 

\item
Reversing a self-homotopy inverts its core: $-J_s=J_{s^{-1}}$. 

\end{enumerate}

It follows from these three properties that given two free self-homotopies $J_s$ and $J'_s$ of $U_*$ representing the same element $s\in\Stab[U_*]$,
we can form a based self-homotopy $J_1=J_{ss^{-1}}\coloneqq J_s\cup-J'_s$ of $U_*$ representing the trivial element $1\in \Stab[U_*]$, with $\mu_3(J_1)=\mu_3(J_s)-\mu_3(J'_s)$.
Together with Lemma~\ref{lem:mu-of-based-self-homotopies} we immediately get:
\begin{lem}\label{lem:J_s}
If $J_s$ and $J'_s$ are two free self-homotopies of $U_*$ representing the same element $s\in\Stab[U_*]$,
then $\mu_3(J_s)-\mu_3(J'_s)=0\in\A_{[U_*]}$.
$\hfill\square$
\end{lem}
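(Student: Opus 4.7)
The plan is to reduce to the based setting via the three properties of cores listed just before the lemma, and then invoke Lemma~\ref{lem:mu-of-based-self-homotopies}.

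First I would form the concatenation $J_1 \coloneqq J_s \cup (-J'_s)$, which is again a free self-homotopy of $U_*$. By properties (2) and (3) of cores, its core is $s \cdot s^{-1} = 1 \in \pi_1N$. Property (1) then upgrades $J_1$, via a homotopy rel boundary, to a based self-homotopy $J^b_1$ of $U_*$.

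Next I would compute $\mu_3(J_1)$ two ways. On the one hand, the self-intersection invariant is invariant under homotopies rel boundary (this is the standard homotopy invariance of $\mu_3$ for tracks $S^2 \times I \imra N \times I$ with fixed boundary), so $\mu_3(J_1)=\mu_3(J^b_1) \in \A$, and applying Lemma~\ref{lem:mu-of-based-self-homotopies} to $J^b_1$ gives $\mu_3(J^b_1) = 0 \in \A_{[U_*]}$. On the other hand, the transverse self-intersections of the generic track $J_s \cup (-J'_s)$ are the disjoint union of those of $J_s$ and those of $-J'_s$; reversing the $I$-parameter reverses the orientation of the track, which flips the sign of each double-point contribution, so $\mu_3(-J'_s) = -\mu_3(J'_s)$ and hence $\mu_3(J_1) = \mu_3(J_s) - \mu_3(J'_s)$. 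Combining, $\mu_3(J_s) - \mu_3(J'_s) = 0 \in \A_{[U_*]}$.

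The only nontrivial ingredient is property (1), i.e.\ that a free self-homotopy with trivial core is rel-boundary homotopic to a based one; this is the expected general-position statement, since the loop traced by $J_1(e,-)$ is null-homotopic in $N$, and a null-homotopy of this loop can be used to isotope $J_1$ near $\{e\} \times I$ so that the basepoint stays fixed. Granting this (and the sign-reversal calculation for orientation-reversed tracks, which is built into the definition of $\mu_3$), the lemma follows immediately from Lemma~\ref{lem:mu-of-based-self-homotopies}, as the text claims.
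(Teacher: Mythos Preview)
Your proposal is correct and matches the paper's own argument essentially verbatim: the paragraph immediately preceding the lemma forms $J_1\coloneqq J_s\cup -J'_s$, uses the three core properties to identify it with a based self-homotopy, records $\mu_3(J_1)=\mu_3(J_s)-\mu_3(J'_s)$, and invokes Lemma~\ref{lem:mu-of-based-self-homotopies}. Your added remarks on homotopy invariance of $\mu_3$, the sign reversal under $H\mapsto -H$, and the null-homotopy justification for property~(1) are exactly the routine checks the paper leaves implicit.
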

As a result of Lemma~\ref{lem:J_s}, the element
\[
U_s \coloneqq [\mu_3(J_s)]\in\A_{[U_*]}
\]
is well defined, and hence so is the affine action ${}^sa\coloneqq U_s + sas^{-1}$ of $\Stab[U_*]$ on $\A_{[U_*]}$.
This clarifies Definition~\ref{def:unbased-A_F} of the target of the free isotopy invariant $\fq_{U_*}\in\A_{U_*}$ as the quotient set of the based isotopy target $\fq_{ [U_*]}\in\A_{[U_*]}$ under this action.

The following lemma will be used in sections~\ref{subsec:free-fq-well-defined} and~\ref{sec:dax-proofs}; it illustrates how the affine action describes the effect of free self-homotopies on the self-intersection invariant.
\begin{lem}\label{lem:basic-concatenation}
    If $H$ is a homotopy from $U_*$ to an embedding $R$, and $J_s$ is a free self-homotopy of $U_*$ representing $s\in\Stab[U_*]$, then
     the free homotopy $J_s\cup H$ from $U$ to $R$ satisfies 
    \[
    \mu_3(J_s\cup H)=\mu_3(J_s)+s\mu_3(H)s^{-1}\in\A_{[U_*]}.
    \]
\end{lem}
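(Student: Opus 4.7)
The strategy is a direct comparison of self-intersection counts of $J_s\cup H$ against those of $J_s$ and of $H$, with careful bookkeeping of whiskers. First I would parametrize $J_s\cup H$ so that $J_s$ occupies $[0,1/2]$ and $H$ occupies $[1/2,1]$, and by a small ambient isotopy arrange that the track restricts to a product $U_*\times[1/2-\epsilon,1/2+\epsilon]$ in a collar of $t=1/2$. After a generic perturbation relative to this collar no new self-intersections appear near the junction, so the transverse self-intersections of $J_s\cup H$ decompose as the disjoint union of those of $J_s$ (at times in $[0,1/2)$) and those of $H$ (at times in $(1/2,1]$), with their original local signs $\epsilon_p$.

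The substance of the argument lies in comparing the group element labels. Fix a whisker $\omega$ from the ambient basepoint of $N\times I$ to $(U_*(e),0)$, and use its obvious lift as the basing for each of the three tracks $J_s$, $H$, and $J_s\cup H$, following the convention of section~\ref{sec:homotopies} that the whisker is placed at the start. For a self-intersection $p\in J_s\pitchfork J_s$, the sheet-changing loop based at $\omega$ is literally the same loop in the concatenation as in $J_s$ alone, so $p$ contributes identically to both $\mu_3(J_s\cup H)$ and $\mu_3(J_s)$. For a self-intersection $q\in H\pitchfork H$, the whisker used to compute $\mu_3(H)$ ends at $(U_*(e),0)$ in $H$-coordinates, while in $J_s\cup H$ this point sits at time $1/2$ and is joined to $(U_*(e),0)$ of the concatenation by the path $J_s(e,-)$, which represents $s\in\pi_1N$ by definition of the core. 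Hence the sheet-changing loop at $q$ in the concatenation is conjugate by $s$ to the corresponding loop in $H$ alone, so $q$ contributes $\epsilon_q\cdot sg_qs^{-1}$.

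Summing over all self-intersections yields $\mu_3(J_s\cup H)=\mu_3(J_s)+s\mu_3(H)s^{-1}\in\A$, and \emph{a fortiori} in $\A_{[U_*]}$. The only genuinely subtle step is the whisker comparison for self-intersections of $H$: one must verify that the detour along $J_s(e,-)$ contributes \emph{conjugation} by $s$ rather than one-sided multiplication, which follows because the sheet-changing loop traverses this detour once outbound and once inversely on the return.
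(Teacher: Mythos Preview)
Your argument is correct and follows exactly the paper's approach: the paper's proof observes that double points of $J_s\cup H$ are those of $J_s$ or of $H$, and that the whisker convention at the start of $J_s$ forces the double-point group elements of $H$ to be conjugated by $s$. Your write-up simply makes the whisker bookkeeping and the reason for conjugation (outbound and return along $J_s(e,-)$) more explicit than the paper's three-sentence proof.
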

\begin{proof}
    It is clear that each double point of the track of $J_s\cup H$ is either a double point of $J_s$ or $H$. By our convention, the computation of $\mu_3(J_s\cup H)$ uses the whisker for $U_*$ at the start of $J_s$. Thus, double point loops of $H$ get conjugated by representatives of $s$ while traversing $J_s$, so all the double-point group elements of $\mu_3(H)$ get conjugated by $s$. 
\end{proof}

\subsubsection{\texorpdfstring{$\fq_{U_*}$}{fq} is well defined}\label{subsec:free-fq-well-defined}
It suffices to show that $\fq_{U_*}(R)=[\mu_3(H)]\in\A_{U_*}$ is independent of the choice of $H$.
For $H$ and $H'$ two choices of free homotopies from $U_*$ to $R$, the concatenation $J_s\coloneqq H\cup -H'$ is a self-homotopy of $U_*$ representing some $s\in\Stab[U_*]$, and by Lemma~\ref{lem:basic-concatenation} we have 
\[
    \mu_3(J_s)=\mu_3(H)-s\mu_3(H')s^{-1}\in\A.
\]
So $\mu_3(H)=\mu_3(J_s)+s\mu_3(H')s^{-1}= {}^s(\mu_3(H'))\in\A_{[U_*]}$ which implies $[\mu_3(H)]=[\mu_3(H')]\in\A_{U_*}$, and hence $\fq_{U_*}(R)$ is well defined.

\subsubsection{\texorpdfstring{$\fq_{U_*}$}{fq} is injective}
If $\fq_{U_*}(R)=\fq_{U_*}(R')$,
then by the definition of the target $\A_{U_*}$ there exist homotopies $H$ and $H'$ from $U_*$ to $R$ and $R'$, respectively, such that 
\[
\mu_3(H')=U_s+s\mu_3(H)s^{-1}\in\A_{[U_*]}
\] 
for some $s\in\Stab[U_*]$.

Consider the homotopy $H''\coloneqq -H'\cup J_s\cup H$ from $R'$ to $R$, where $J_s$ is any self-homotopy of $U_*$ representing $s$.
Using the whisker on $U_*$ in $-H'\cap J_s\subset H''$ we have the following computation in $\A_{[U_*]}$:

\[
\begin{array}{rcl}
\mu_3(H'')  & =  &  \mu_3(-H')+\mu_3(J_s)+s\mu_3(H)s^{-1} \\
& =  &  -\mu_3(H')+U_s+s\mu_3(H)s^{-1} \\
  &  = & -(U_s+s\mu_3(H)s^{-1})+U_s+s\mu_3(H)s^{-1}  \\
  & =  & 0 
\end{array}
\]

It follows from Lemma~\ref{lem:homotopies-to-isotopies} that $R$ is isotopic to $R'$.

\subsubsection{\texorpdfstring{$\fq_{U_*}$}{fq} is surjective}
Surjectivity follows directly from Lemma~\ref{lem:mu-action}.

\subsection{Examples}\label{sec:examples}
Recall that Corollary~\ref{cor:null-homotopic} of the Introduction states that free isotopy classes of null-homotopic 2-spheres in $N$ are in bijection with 
$
\A_{U_*}=\A_{[U_*]}  / \pi_1N 
$,
where the action is by conjugation.

Here we examine some examples of free isotopy classes of essential 2-spheres:

 \subsubsection{}

Consider $U_*=S^2\times\{p\}\subset N=S^2\times M^3$.

Then $U_s=0$ for all $s \in\Stab[U_*]= \pi_1N$, because any self-homotopy $J_s$ can be chosen to be a self-isotopy which moves $p$ around a loop representing $s$ while fixing the $S^2$-factor.

So the affine action has trivial translations (cf.~section~\ref{intro:free-isotopy}) and free isotopy classes of spheres homotopic to $U_*$ are in bijection with 
$\A_{U_*}=\A_{[U_*]}  / \pi_1M  \text{ (with conjugation action)}$.

\subsubsection{}\label{subsubsec:example-free-target-depends-on-isotopy-class}

Consider again $U_*=S^2\times\{p\}\subset N=S^2\times M^3$.

Assume $[g,h]\neq 1\in\pi_1M\cong\pi_1N$.

If $U_*^g$ is the result of doing a $g$-finger move on $U_*$, then 
$U^g_s=g-sgs^{-1}$ for each
$s\in\Stab[U_*^g]= \pi_1N$.
Here $U^g_s=\mu_3(J_s)$ where $J_s$ is a self-homotopy of $U_*^g$ that undoes the $g$-finger move, then moves $p$ around a loop representing $s$ while fixing $S^2$, and then redoes the $g$-finger move.
In particular, $U^g_h=g-hgh^{-1}\neq 0\in\A_{[U_*^g]}=\A_{[U_*]}$ if $\pi_3M=0$.

So in this case the affine action ${}^ha=(g-hgh^{-1})+hah^{-1}$ defining $\A_{U_*^g}$ as a quotient of $\A_{[U_*^g]}=\A_{[U_*]}$ has non-trivial translations, 
illustrating how the target of the free isotopy invariant depends in general on the isotopy class of the fixed embedding, and not just on its homotopy class.

This suggests the following questions:
When does a homotopy class of 2-spheres in $N$ contain an isotopy class such that 
the corresponding affine action has trivial translations?
Are stabilizers of elements of $\pi_2N$ always represented by some embedded $S^2\times S^1\subset N\times S^1$?

\section{A space level approach following Dax}
\label{sec:Dax-approach}

In this section we reprove our two main results, Theorem~\ref{thm:based-fq} and Theorem~\ref{thm:free}, using a space level approach of Dax and \cite{KST-Dax}.

\subsection{The relative homotopy group}\label{subsec:dax-preliminaries}
In \cite{KST-Dax}, following \cite{KT-highd} and~\cite{Dax}, we compute the relative homotopy group $\pi_{d-2\ell}(\Map(V,X),\Emb(V,X), U)$ for any $\ell$-manifold $V$ and $d$-manifold $X$, and a fixed embedding $U\colon V\hra X$. In our case of interest, $V=S^2$ and $X=N$ of dimension $d=5$, the relevant result is as follows.
\begin{thm}[{\cite{KST-Dax}}]
\label{thm:main-Dax-part}
    Let $N$ be an oriented connected $5$-manifold and $U_*\colon S^2\hra N$ a smooth based embedding. Then there are bijections
    \[ \begin{tikzcd}
        \Da\colon
        \pi_1\big(\Map_*(S^2,N),\Emb_*(S^2,N), U_*\big)\rar{i^{rel}}[swap]{\cong} &
        \pi_1\big(\Map(S^2,N),\Emb(S^2,N), U\big) 
        \rar[swap]{\cong} & \A,
    \end{tikzcd}
    \]
    given on a class $[H]$ as the sum over double points of the associated group elements of the track of $H\colon I\to\Map(S^2,N)$, defined by
    $I\times S^2\to I\times N$, $(t,v)\mapsto(t,H_t(v))$. In other words, $\Da([H])=\mu_3(H)$ is precisely the self-intersection invariant from \eqref{eq-def:mu3-H}.
\end{thm}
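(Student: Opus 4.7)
My plan is to establish the two bijections separately: first that the inclusion of pairs induces the bijection $i^{rel}$ on relative $\pi_1$, and then that the self-intersection count $\mu_3$ defines a bijection onto $\A$.

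For $i^{rel}$, I would exploit evaluation $ev_e$ at the basepoint $e\in S^2$, which gives a commutative diagram of Serre fibrations $\Emb_*(S^2,N)\to\Emb(S^2,N)\to N$ and $\Map_*(S^2,N)\to\Map(S^2,N)\to N$ over a common base $N$, connected by the vertical inclusions of embeddings into maps. Naturality of the long exact sequence of a fibration, applied to the relative pair $(\Map,\Emb)$, identifies the fiber relative $\pi_1$ with the total relative $\pi_1$: since the induced map of bases is the identity of $N$, the relative homotopy groups of the base pair vanish, and the resulting long exact sequence collapses to show $i^{rel}$ is a bijection.

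For the bijection to $\A$, I would invoke the main computation of \cite{KST-Dax} specialized to $V=S^2$ and $d=5$. A class in the relative $\pi_1$ is represented by a path $H\colon I\to\Map_*(S^2,N)$ starting at $U_*$ and ending at a based embedding, whose track $\tilde H\colon S^2\times I\imra N\times I$ is a generic immersion of a $3$-manifold in a $6$-manifold with finitely many transverse double points. Assigning signed double-point group elements produces $\mu_3(\tilde H)\in\A$, which is the proposed map $\Da$. Well-definedness under relative homotopy is checked by examining the double-point set of the track of a generic $2$-parameter family $K\colon I^2\to\Map$: that set is a $1$-manifold whose boundary realizes algebraic cancellations, and the two defining relations $g+g^{-1}=0$ and $1=0$ of $\A$ correspond respectively to sheet-exchange arcs and cusp arcs that occur generically in such a family.

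The main obstacle is injectivity of $\Da$: given $H$ with $\mu_3(\tilde H)=0\in\A$, one must produce a relative homotopy from $H$ to a path lying entirely in $\Emb_*(S^2,N)$. This is exactly the content of the level-preserving Whitney move of Proposition~\ref{prop:6d-Whitney}. The vanishing of $\mu_3$ pairs the double points into algebraically cancelling pairs, and each pair is eliminated by a Whitney move that preserves $I$-levels, producing a homotopy rel boundary of $\tilde H$ to the track of an isotopy. Since the boundary of the track is fixed throughout, this rel-boundary homotopy of tracks is precisely a relative homotopy in $\pi_1(\Map_*,\Emb_*,U_*)$ from $H$ to a path of embeddings, proving that the class of $H$ is trivial. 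Surjectivity, by contrast, follows immediately from Lemma~\ref{lem:mu-action}: for any $a\in\A$, the geometric action on $U_*$ produces an embedding $a\cdot U_*$ together with a based homotopy $H_a$ with $\mu_3(H_a)=a$.
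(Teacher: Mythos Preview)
This theorem is not proved in the present paper; it is quoted from \cite{KST-Dax} and used as input in Section~\ref{sec:Dax-approach}. So there is no proof here to compare against directly, beyond the hints in the text: the Remark at the end of Section~\ref{subsec:intro-Dax} says that $i^{rel}$ is established in \cite{KST-Dax} via the evaluation fibration $\Emb_*(S^2,N)\to\Emb(S^2,N)\to N$, exactly as you propose, and the Remark immediately following the theorem statement says the inverse of $\Da$ is the realization map built from Lemma~\ref{lem:mu-action}. Your outline is consistent with both hints.

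That said, there is a genuine gap in your injectivity argument. You show that $\mu_3(H)=0$ forces $[H]$ to be trivial, via Proposition~\ref{prop:6d-Whitney}; that is correct, but it only gives $\Da^{-1}(0)=\{*\}$. Since the relative $\pi_1$ is a priori merely a pointed set and $\Da$ is not yet known to be a homomorphism for any group structure, a trivial preimage of $0$ does not imply injectivity. The fix, which is also what the paper's Remark indicates, is to build the realization map $r\colon\A\to\pi_1(\Map_*,\Emb_*,U_*)$ from Lemma~\ref{lem:mu-action} and check both compositions. You already have $\Da\circ r=\mathrm{id}_\A$. For $r\circ\Da=\mathrm{id}$: given $H$ with $\mu_3(H)=a$, the concatenation of the reverse of $r(a)$ with $H$ is a path between embeddings with vanishing $\mu_3$; apply Proposition~\ref{prop:6d-Whitney} to homotope it rel endpoints to an isotopy $\gamma$, and conclude $[H]=[r(a)\cdot\gamma]=[r(a)]$, since appending a path lying in $\Emb_*$ does not change the relative class.

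A small correction to your well-definedness sketch: the relation $g+g^{-1}=0$ is not produced by arcs in a $2$-parameter family; it encodes the sheet-ordering ambiguity at each double point of a single track. Only the relation $1=0$ is genuinely about homotopy invariance (cusp homotopies), as explained in Section~\ref{subsec:3-in-6}.
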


\begin{rem}
    Using Lemma~\ref{lem:mu-action} one can define an explicit inverse of $\Da$. This is completely analogous to the realization map $\realmap$ in \cite{KT-highd,KST-Dax}.
\end{rem}
Our main square from section~\ref{subsec:intro-square-diagram} extends to a commutative diagram:
\begin{equation}\label{big-diagram}\begin{tikzcd}[column sep=15pt]    
    \pi_1(\Map_*(S^2,N),U_*)\rar{j_*}\dar{i} &  \pi_1(\Map_*,\Emb_*, U_*)\rar\dar{i^{rel}} &
    \pi_0\Emb_*(S^2,N)\rar[two heads]{p_*}\dar[two heads] &
    \pi_0\Map_*(S^2,N)\dar[two heads]\\
    \pi_1(\Map(S^2,N),U)\rar{j} &  \pi_1(\Map,\Emb, U)\rar &
    \pi_0\Emb(S^2,N)\rar[two heads]{p} &
    \pi_0\Map(S^2,N)
    \end{tikzcd}
\end{equation}
The top row is a final part of the long exact sequence of the pair in the based case, $(\Map_*,\Emb_*)\coloneqq(\Map_*(S^2,N),\Emb_*(S^2,N))$, whereas the bottom row is from the long exact sequence of the pair in the corresponding free case.

We use the following standard facts about homotopy groups of mapping spaces, see~\cite{KST-Dax} for details.
\begin{lem}\label{lem:htpy-grps}
   There are isomorphisms $\pi_k(\Map_*(S^2,N), U_*)\to\pi_{k+2}(N)$, 
   for $k\geq0$, and a bijection $\pi_0\Map(S^2,N)\cong\pi_2N/\{\alpha - g\alpha\}$ for the usual action of $g\in\pi_1N$ on $\alpha\in\pi_2N$. For any $\beta\in\Map_*(S^2,N)$ there is an exact sequence
    \[
    \begin{tikzcd}
        \pi_2N\rar & \pi_3N\cong \pi_1(\Map_*(S^2,N),\beta) \rar{ i} &
        \pi_1(\Map(S^2,N),\beta)\rar[two heads]{ev_e} &
       \Stab\beta,
    \end{tikzcd}
    \]
    where $\Stab\beta\coloneqq\{g\in\pi_1N: g\beta=\beta\in\pi_2N\}$.
\end{lem}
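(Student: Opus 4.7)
The plan is to derive all three assertions from the evaluation fibration
\[
\Map_*(S^2,N)\to \Map(S^2,N)\xrightarrow{\,ev_e\,} N
\]
(where $ev_e$ evaluates at the basepoint $e\in S^2$), together with the standard smash-loop adjunction, and to identify the resulting connecting homomorphism with the geometric $\pi_1N$-action on $\pi_2N$.

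First, I would verify the isomorphisms $\pi_k(\Map_*(S^2,N),U_*)\cong\pi_{k+2}N$ via the classical adjunction
\[
[S^k,\Map_*(S^2,N)]_*\;\cong\;[S^k\wedge S^2,N]_*\;\cong\;[S^{k+2},N]_*,
\]
whose right-hand side is $\pi_{k+2}N$; the basepoint on the right corresponds to $U_*$ under the adjunction, but is irrelevant up to isomorphism since $N$ is connected.

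Next, for the bijection computing $\pi_0\Map(S^2,N)$, I would extract the tail of the long exact sequence of the fibration:
\[
\pi_1N\xrightarrow{\,\partial\,}\pi_0\Map_*(S^2,N)\to\pi_0\Map(S^2,N)\to\pi_0N=\ast.
\]
The connecting map $\partial\colon\pi_1N\to\pi_2N$ sends a loop $g$ to $g\alpha-\alpha$, where $\alpha\in\pi_2N$ represents the component in question: lifting $g$ to a path in $\Map(S^2,N)$ starting at a representative of $\alpha$ drags the basepoint around $g$ and lands on a representative of $g\alpha$. Exactness then identifies $\pi_0\Map(S^2,N)$ with $\pi_2N/\{g\alpha-\alpha\}$.

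For the exact sequence involving $\Stab\beta$, I would extract the segment
\[
\pi_2N\to\pi_1(\Map_*(S^2,N),\beta)\xrightarrow{\,i\,}\pi_1(\Map(S^2,N),\beta)\xrightarrow{\,ev_e\,}\pi_1N\xrightarrow{\,\partial\,}\pi_2N
\]
from the same long exact sequence, and use the adjunction identification $\pi_1(\Map_*(S^2,N),\beta)\cong\pi_3N$ from the first step. By the previous paragraph $\partial(g)=g\beta-\beta$, so $\ker(\partial)=\Stab\beta$; exactness at $\pi_1N$ then yields the stated surjection $ev_e\colon\pi_1\Map(S^2,N)\twoheadrightarrow\Stab\beta$, while exactness at $\pi_3N$ is the remaining content of the lemma. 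The one step with any real content is this identification of $\partial$ with the geometric $\pi_1N$-action on $\pi_2N$; the rest is bookkeeping for the long exact sequence of a standard fibration.
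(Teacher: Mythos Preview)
The paper does not actually prove this lemma in-text; it labels the assertions as ``standard facts about homotopy groups of mapping spaces'' and defers to \cite{KST-Dax} for details. Your argument via the evaluation fibration $\Map_*(S^2,N)\to\Map(S^2,N)\to N$ together with the smash--loop adjunction is exactly the standard route, and the paper itself already invokes this fibration in the introduction (diagram~\eqref{eq:big-diagram}), so your write-up is in the intended spirit.

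One small correction is worth making. In your first step, the adjunction $[S^k,\Map_*(S^2,N)]_*\cong[S^{k+2},N]_*$ naturally uses the \emph{constant map} as the basepoint of $\Map_*(S^2,N)$, not $U_*$. Connectedness of $N$ alone does not let you move the basepoint, since $\Map_*(S^2,N)$ is in general disconnected (its $\pi_0$ is $\pi_2N$). The clean fix is that $\Map_*(S^2,N)=\Omega^2N$ is an H-space, so loop-addition by $-U_*$ gives a self homotopy equivalence carrying the component of $U_*$ to that of the constant map; hence $\pi_k(\Map_*(S^2,N),U_*)\cong\pi_k(\Map_*(S^2,N),\mathrm{const})\cong\pi_{k+2}N$. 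With that adjustment your argument is complete.
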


Combining~\eqref{big-diagram} and Lemma~\ref{lem:htpy-grps} with Theorem~\ref{thm:main-Dax-part}, and denoting by $[U_*]$ the class of $U_*$ in $\pi_3N\cong\pi_1\Map_*(S^2,N)$, we have the commutative diagram:
\begin{equation}\label{big-diagram-final}\begin{tikzcd}[column sep=15pt]    
    \pi_3N\rar{j_*}\dar{i} &  \pi_1(\Map_*,\Emb_*, U_*)\rar\dar{\cong}[swap]{i^{rel}} &
    \pi_0\Emb_*(S^2,N)\rar[two heads]{p_*}\dar[two heads] &
    \pi_2N\dar[two heads]\\
    \pi_1(\Map(S^2,N),U)\rar{j}\dar[two heads]{ev_e} &  \pi_1(\Map,\Emb, U)\rar\dar{\cong}[swap]{\Da} &
    \pi_0\Emb(S^2,N)\rar[two heads]{p} &
    \pi_0\Map(S^2,N)\\
    \Stab[U_*] & \A &
    \end{tikzcd}
\end{equation}
which will imply desired results, as explained next.

\subsection{The proofs}\label{sec:dax-proofs}
The following recovers Theorem~\ref{thm:based-fq}. 
\begin{thm}\label{thm:main-Dax-part-1}
    There is a short exact sequence of sets
\[
\begin{tikzcd}
    \faktor{\A}{\Da\circ i^{rel}\circ j_*(\pi_3N)} \rar[tail] & \pi_0\Emb_*(S^2,N)\rar[two heads]{p_*} & 
    \pi_0\Map_*(S^2,N)\cong\pi_2N
\end{tikzcd}
\]
    and $\Da\circ i^{rel}\circ j_*=\phi_{[U_*]}$ from Equation~\eqref{eq-def:phi} of section~\ref{subsec:intro-based-isotopy}.
\end{thm}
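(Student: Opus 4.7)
The plan is to extract both assertions from the long exact sequence of the pair $(\Map_*,\Emb_*)$ at $U_*$, combined with the Dax bijection of Theorem~\ref{thm:main-Dax-part} and the geometric computation from Lemma~\ref{lem:mu-of-based-self-homotopies}. First I would invoke the terminal portion of the long exact sequence of the pair, exact as pointed sets:
\[
    \pi_1(\Map_*(S^2,N), U_*) \xrightarrow{j_*} \pi_1(\Map_*,\Emb_*, U_*) \xrightarrow{\partial} \pi_0\Emb_*(S^2,N) \xrightarrow{p_*} \pi_0\Map_*(S^2,N).
\]
Using the bijection $\Da\circ i^{rel}\colon \pi_1(\Map_*,\Emb_*,U_*) \xrightarrow{\cong} \A$ of Theorem~\ref{thm:main-Dax-part}, the identification $\pi_1(\Map_*(S^2,N),U_*)\cong\pi_3N$ of Lemma~\ref{lem:htpy-grps}, and the surjectivity of $p_*$ by general position (as noted in section~\ref{subsec:intro-square-diagram}), exactness directly produces the claimed short exact sequence of pointed sets: the fiber $p_*^{-1}[U_*]$ is bijectively identified via $\partial\circ(\Da\circ i^{rel})^{-1}$ with the quotient $\A/\Da\circ i^{rel}\circ j_*(\pi_3N)$.

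The substantive step is identifying $\Da\circ i^{rel}\circ j_*$ with $\phi_{[U_*]}$, which is a pointwise verification. Given $A \in \pi_3N$, I would represent it by a based loop $\gamma\colon I\to\Map_*(S^2,N)$ at $U_*$ under the adjoint isomorphism of Lemma~\ref{lem:htpy-grps}. The class $j_*(A)$ is represented by $\gamma$ regarded as a path in $\Map_*$ whose endpoint $\gamma(1)=U_*$ lies in $\Emb_*$, and by the defining formula for $\Da$ in Theorem~\ref{thm:main-Dax-part},
\[
    \Da(i^{rel}(j_*(A))) \;=\; \mu_3(J_*) \;\in\; \A,
\]
where $J_*\colon S^2\times I \imra N\times I$, $(v,t)\mapsto(\gamma(t)(v),t)$, is the generic track of $\gamma$. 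Since $\gamma$ is a based loop, $J_*$ is a based self-homotopy of $U_*$ in the sense of Lemma~\ref{lem:mu-of-based-self-homotopies}.

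The main (and essentially only nontrivial) obstacle is to match the $3$-sphere $A' \coloneqq B_J\cup(-B_U)\in\pi_3(N\times I)\cong\pi_3N$ appearing in the proof of Lemma~\ref{lem:mu-of-based-self-homotopies} with the original class $A\in\pi_3N$. This I would verify by unwinding the adjoint isomorphism $\pi_3N \cong \pi_1(\Map_*(S^2,N),U_*)$: under the standard identification $S^3 \cong (S^2\times I)/(S^2\times\partial I \cup \{e\}\times I)$, the adjoint of $\gamma$ is precisely the map $(v,t)\mapsto \gamma(t)(v)$, which on the top 3-cell is exactly $B_J$ with collapsed boundary, while $B_U=U_*\times I$ becomes null-homotopic in the quotient. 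This bookkeeping of basepoint and orientation conventions is the only delicate point. Once $A'=A$ is established, the explicit computation in the proof of Lemma~\ref{lem:mu-of-based-self-homotopies} yields
\[
    \Da\circ i^{rel}\circ j_*(A) \;=\; \mu_3(J_*) \;=\; \mu_3(A)+\lambda_N(A,[U_*]) \;=\; \phi_{[U_*]}(A),
\]
completing the proof.
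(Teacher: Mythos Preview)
Your proposal is correct and follows essentially the same route as the paper: both extract the short exact sequence from the long exact sequence of the pair $(\Map_*,\Emb_*)$ together with the Dax bijection, and both identify $\Da\circ i^{rel}\circ j_*$ with $\phi_{[U_*]}$ by recognizing $j_*(A)$ as a based self-homotopy of $U_*$ and invoking the computation from (the proof of) Lemma~\ref{lem:mu-of-based-self-homotopies}. Your explicit verification that the $3$-sphere $A'=B_J\cup(-B_U)$ agrees with the original $A$ under the adjunction is a welcome clarification of a point the paper leaves implicit in the phrase ``represents $A$''.
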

\begin{proof}
    From diagram~\eqref{big-diagram-final} we have $\ker(p_*)=\coker(j_*)\cong\coker(\Da\circ i^{rel}\circ j_*)$, so it only remains to identify the last homomorphism. And indeed, for a class $A\in\pi_3N$ the element $j_*(A)\colon I\to\Map_*$ is a self-homotopy of $U$ that represents $A$ and $\Da(j_*(A))=\mu_3(j_*(A))$ by definition. Now, arguing as in the proof of Lemma~\ref{lem:mu-of-based-self-homotopies} we see that the track of $j_*(A)$ has $\mu_3(j_*(A))=\mu_3(A)+\lambda_N(A,U_*)$, so $\Da\circ i^{rel}\circ j_*(A)=\phi_{[U_*]}(A)$ as desired.
\end{proof}

Similarly, the following recovers Theorem~\ref{thm:free}.
\begin{thm}\label{thm:main-Dax-part-2}
    There is a short exact sequence of sets
\[
\begin{tikzcd}
    \left(\faktor{\A}{\phi_{[U_*]}(\pi_3N)}\right)_{s\mapsto {}^sa} \rar[tail] & \pi_0\Emb(S^2,N)\rar[two heads]{p} & 
    \pi_0\Map(S^2,N),
\end{tikzcd}
\]
where on the left we take the quotient by the action $s\mapsto {}^sa$ of $\Stab[U_*]$ from equation~\eqref{eq:aff} of section~\ref{intro:free-isotopy}.
\end{thm}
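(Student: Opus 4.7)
The plan is to extract the claimed exact sequence from the bottom row of diagram~\eqref{big-diagram-final}, using Theorem~\ref{thm:main-Dax-part} to transport the computation from $\pi_1(\Map,\Emb,U)$ to $\A$ via $\Da$, and then to identify the induced action of $\pi_1(\Map(S^2,N),U)$ on $\A$ with the affine action $s\mapsto{}^sa$ of $\Stab[U_*]$.

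By exactness of the homotopy long exact sequence of the pair $(\Map,\Emb)$, the fiber $p^{-1}[U]$ is in natural bijection with the quotient of $\pi_1(\Map,\Emb,U)$ by the action of $\pi_1(\Map(S^2,N),U)$, so after applying $\Da$ the task reduces to computing the transported action on $\A$. I would split this action in two steps using the evaluation fibration from Lemma~\ref{lem:htpy-grps}, which yields an exact sequence
\[
\pi_3N\xrightarrow{i}\pi_1(\Map(S^2,N),U)\xrightarrow{ev_e}\Stab[U_*].
\]
For $A\in\pi_3N$, the composite $\Da\circ j\circ i(A)$ equals $\phi_{[U_*]}(A)$ by exactly the same geometric computation performed in the proof of Theorem~\ref{thm:main-Dax-part-1} (an application of Lemma~\ref{lem:mu-of-based-self-homotopies}); hence the image of $i$ acts on $\A$ by translation by $\phi_{[U_*]}(\pi_3N)$, producing the first quotient $\A_{[U_*]}$.

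To compute the residual action of $\Stab[U_*]$ on $\A_{[U_*]}$, for each $s\in\Stab[U_*]$ I would choose a lift to a based loop $J_s$ in $\Map(S^2,N)$ with $ev_e(J_s)=s$; geometrically, $J_s$ is (the track of) a free self-homotopy of $U_*$ whose basepoint traces a loop representing $s$. The standard formula for the $\pi_1$-action on a relative $\pi_1$ sends $[H]\mapsto[J_s\cup H]$, so via $\Da$ the transported action becomes $a=\mu_3(H)\mapsto\mu_3(J_s\cup H)$, and Lemma~\ref{lem:basic-concatenation} rewrites this as $\mu_3(J_s)+s\mu_3(H)s^{-1}=U_s+sas^{-1}={}^sa\in\A_{[U_*]}$, which matches~\eqref{eq:aff} exactly. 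Independence of the lift $J_s$ is ensured by Lemma~\ref{lem:J_s} (any two lifts differ by an element of $i(\pi_3N)$, whose contribution is already zero in $\A_{[U_*]}$), so the further quotient $\A_{[U_*]}/\{s\mapsto{}^sa\}$ is precisely $\A_{U_*}$ of Definition~\ref{def:unbased-A_F}, concluding the proof.

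The main obstacle I anticipate is the careful matching of the abstract $\pi_1$-action on the relative $\pi_1$ with its geometric realization as concatenation of tracks of homotopies. The essential point is that the whisker convention used to compute $\mu_3(J_s\cup H)$ automatically produces the conjugation of $\mu_3(H)$ by $s$ that is demanded by~\eqref{eq:aff}; this conjugation is responsible for the affine rather than merely linear structure of the action, and it explains why the translational part $U_s$ is only well defined modulo $\phi_{[U_*]}(\pi_3N)$.
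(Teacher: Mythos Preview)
Your proposal is correct and follows essentially the same route as the paper: both arguments use the exactness of the bottom row of diagram~\eqref{big-diagram-final}, split the $\pi_1(\Map(S^2,N),U)$-action into the image of $i$ and a section of $ev_e$, identify the first with $\phi_{[U_*]}$ via the computation of Theorem~\ref{thm:main-Dax-part-1}, and identify the second with the affine action~\eqref{eq:aff} via Lemma~\ref{lem:basic-concatenation}. The only cosmetic difference is that you invoke Lemma~\ref{lem:J_s} explicitly for the independence of the lift $J_s$, whereas the paper phrases this as having already modded out $\ker(ev_e)$.
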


\begin{proof}
From diagram~\eqref{big-diagram-final} we have $\ker(p)=\coker(j)\cong\coker(\Da\circ j)$.  Using the leftmost column we can compute $\coker(\Da\circ j)$ in two steps
    \begin{enumerate}
        \item first take the cokernel of $\Da\circ j\circ i$,
        \item then mod out the induced action of $\Stab[U_*]$, using any section of $ev_e$.
    \end{enumerate}
    Note that the action in (2) is well defined, and the set of coinvariants is independent of the section, since in $\coker( j\circ i)$ we have modded out $\ker(ev_e)$.

    For (1), we simply note that $\Da\circ j\circ i=\Da\circ i^{rel}\circ j_*$ by the commutativity of the leftmost square in~\eqref{big-diagram-final}, and this is equal to $\phi_{[U_*]}$ by Theorem~\ref{thm:main-Dax-part-1}.

    For (2), to compute the action, we pick any section; by definition, this sends $s\in\Stab[U_*]$ to any
    \[
        J_s\in\pi_1(\Map(S^2,N),U),
    \]
    which we view as a free self-homotopy of $U_*$, for which $ev_e(J_s)=J_s(-,e)$ represents $s$.
    
    Then $s\in\Stab[U_*]$ acts by sending $a=\Da(H)$ to $\Da(J_s\cdot H)$. Since
    \[
        \Da(J_s\cdot H)=\Da(J_s)+s\Da(H)s^{-1}
    \]
    by Lemma~\ref{lem:basic-concatenation} (where $\mu_3$ notation was used in place of $\Da$), we see that the action of $s$ on $a$ is given by
    \[
        \Da(J_s)+sa s^{-1}=\mu_3(J_s)+sas^{-1}={}^sa
    \]
    as claimed.
\end{proof}

\vspace{0.5cm}

\phantomsection
\printbibliography

\vspace{1em}
\hrule
\vspace{0.2cm}

\end{document}